\title{Cyclic Relative Difference Sets and Circulant Weighing Matrices}
\begin{document}

\author{Daniel M. Gordon
  \thanks{D.M. Gordon is with the IDA Center for Communications
    Research-La Jolla, 4320 Westerra Court, San Diego, CA 92121, USA (email: gordon@ccr-lajolla.org)}
}

\newcommand{\dnd}{\;\!\mid\mskip -9mu \not \;\;}
\def\Fnum{{\mathbb{F}}} 
\def\Qnum{{\mathbb{Q}}} 
\def\Nnum{{\mathbb{N}}} 
\def\Cnum{{\mathbb{C}}} 
\def\Znum{{\mathbb{Z}}} 
\def\Rnum{{\mathbb{R}}} 
\newcommand{\br}[1]{\langle {#1} \rangle }
\def\Mdot{{\dot{M}}}
\def\cO{{\cal O}}

\newtheorem{question}{Question} 
\newtheorem{idea}{Idea} 

\newtheorem{theorem}{Theorem}
\newtheorem{lemma}[theorem]{Lemma}
\newtheorem{conjecture}[theorem]{Conjecture}
\newtheorem{remark}{Remark}
\newtheorem{claim}{Claim}
\newtheorem{corollary}[theorem]{Corollary}
\newtheorem{proposition}[theorem]{Proposition}
\newtheorem{definition}[theorem]{Definition}

\maketitle

\abstract{
An $(m,n,k,\lambda)$-relative difference set is a lifting of an
$(m,k,n\lambda)$-difference set.  Lam gave a table of cyclic relative
difference sets with $k \leq 50$ in 1977, all of which were liftings
of $( \frac{q^d-1}{q-1},q^{d-1},q^{d-2}(q-1))$-difference sets, the
parameters of complements of classical Singer difference sets.  Pott
found all liftings of these difference sets with $n$ odd and $k \leq
64$ in 1995.  No other nontrivial difference sets are known with
liftings to relative difference sets, and Pott ended his survey on
relative difference sets asking whether there are any others.

In this paper we extend these searches, and apply the results to the
existence of circulant weighing matrices.
}


\section{Introduction}

A $(v,k,\lambda)$-difference set is a subset
$D=\{d_1,d_2,\ldots,d_k\}$ of a group $G$ such that every nonzero
element of $G$ is represented exactly $\lambda$ times as a difference
of two elements of $D$.
We will identify $D$ with its representation $D=\sum_{i=1}^k d_i$ in the group
ring $\Znum[G]$, which  satisfies
\begin{equation}\label{eq:gpring}
D D^{-1} = k-\lambda + \lambda G,
\end{equation}

There is a vast literature on difference
sets; see, for example \cite{bjl}.
An important class of difference sets are {\em Singer difference
  sets}, which are constructed from projective geometries
and have parameters
\begin{equation}\label{eq:singer}
  \left(
  \frac{q^{d}-1}{q-1},\frac{q^{d-1}-1}{q-1},\frac{q^{d-2}-1}{q-1} \right),
\end{equation}
for $q$ a prime power.

The complement of a $(v,k,\lambda)$-difference set is a
$(v,v-k,v-2k+\lambda)$-difference set.  The complement of a Singer
difference set has parameters
\begin{equation}\label{eq:singercomp}
\left( \frac{q^d-1}{q-1},q^{d-1},q^{d-2}(q-1) \right).
\end{equation}

An $(m,n,k,\lambda)$-relative difference set (RDS) $R$ in a group $G$
relative to a normal subgroup $N$ is a $k$-subset of $G$ such that the
differences of distinct elements of $R$ contain every element of
$G \backslash N$ exactly $\lambda$ times, and no non-identity element
of $N$.  Here $G$ has order $mn$, and the forbidden subgroup $N$ has
order $n$.  An RDS is called abelian if $G$ is abelian, and cyclic if
$G$ is cyclic.

The group ring equation for an RDS is:
\begin{equation}\label{eq:rds_gpring}
R R^{-1} = k  + \lambda (G-N),
\end{equation}


An $(m,1,k,\lambda)$-relative difference set is a difference set.
See \cite{pott} for background on relative difference
sets.

An RDS is {\em splitting} if $G$ is isomorphic to the direct product of $N$
and $G/N$.  For cyclic groups, that means that $m$ and $n$ are
relatively prime.


Elliott and Butson \cite{elliott1966illinois} first noted that
the projection of a relative difference set to a subgroup yields
another relative difference set:

\begin{theorem}\label{thm:lift}
  Let $R$ be an $(m,n,k,\lambda)$-RDS in $G$.  If $U$ is a normal
  subgroup of $G$ of order $u$ contained in $N$, then there exists an
  $(m,n/u,k,\lambda u)$-RDS in $G/U$ relative to $N/U$.
  In particular, $G/N$ contains an
  $(m,k,\lambda n)$-difference set.
\end{theorem}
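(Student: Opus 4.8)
The plan is to push the relative difference set forward through the canonical projection $\phi\colon G \to G/U$ and then read off the group ring identity \eqref{eq:rds_gpring} for the image. I extend $\phi$ linearly to a ring homomorphism $\Znum[G] \to \Znum[G/U]$; since $\phi(g^{-1}) = \phi(g)^{-1}$, this map commutes with the antipode, so that $\phi(R^{-1}) = \phi(R)^{-1}$.

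The first step, and really the only delicate one, is to check that $\bar R := \phi(R)$ is again a genuine $k$-element subset of $G/U$, i.e.\ that $\phi$ is injective on $R$. If two distinct elements $r_1, r_2 \in R$ had the same image, then $r_1 r_2^{-1}$ would be a non-identity element of $U \subseteq N$ arising as a difference of distinct elements of $R$, contradicting the defining property of an RDS that no non-identity element of $N$ is represented. Hence $\bar R$ has $k$ distinct terms, each with coefficient $1$. This is the step I expect to carry the real content; everything after it is bookkeeping.

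Next I apply $\phi$ to \eqref{eq:rds_gpring}. On the left this gives $\bar R\, \bar R^{-1}$. On the right I use that $U \subseteq N$, so that $G$ and $N$ are each unions of $U$-cosets, and every element of $G/U$ (resp.\ $N/U$) has exactly $u$ preimages in $G$ (resp.\ $N$); hence $\phi(G) = u\,(G/U)$ and $\phi(N) = u\,(N/U)$. This yields
\[ \bar R\, \bar R^{-1} = k + \lambda u\bigl( (G/U) - (N/U) \bigr), \]
which is precisely \eqref{eq:rds_gpring} for an $(m, n/u, k, \lambda u)$-RDS in $G/U$ relative to $N/U$: the ambient group has order $mn/u = m(n/u)$, the forbidden subgroup $N/U$ has order $n/u$ and is normal by the correspondence theorem, the multiplicity is $\lambda u$, and $k$ is unchanged.

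Finally, for the ``in particular'' assertion I specialize $U = N$ (which is normal in $G$, with $u = n$). Then $N/U$ is trivial and the displayed equation collapses to $\bar R\, \bar R^{-1} = k - \lambda n + \lambda n\,(G/N)$, matching \eqref{eq:gpring} with underlying group of order $m$ and index $\lambda n$. Thus $\bar R$ is an $(m, k, \lambda n)$-difference set in $G/N$, as claimed.
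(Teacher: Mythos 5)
Your proof is correct, and it is the standard argument for this result: the paper itself gives no proof, only a citation to Elliott and Butson, and their argument is exactly this projection of the group ring identity \eqref{eq:rds_gpring} under $\Znum[G]\to\Znum[G/U]$, with the injectivity of the quotient map on $R$ (forced by $U\subseteq N$ and the forbidden-subgroup condition) as the one substantive step. Nothing is missing; the parameter bookkeeping $\phi(G)=u\,(G/U)$, $\phi(N)=u\,(N/U)$ and the specialization $U=N$ recovering \eqref{eq:gpring} are all handled correctly.
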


In this case $R$ is called a {\em lifting} or {\em extension} of the
difference set.  
For example, $\{0,3,5,6\}$ is a $(7,4,2)$ difference set (the complement of the
$(7,3,1)$ projective plane of order two), and it may be lifted to
$\{0,3,5,13\}$, a $(7,2,4,1)$-RDS in
$\Znum_{14}$
relative to the subgroup $\{0,7\}$.

There are relative difference sets which are liftings of trivial
$(m,m,m)$-difference sets (these are called {\em semiregular}), and of
trivial $(m+1,m,m-1)$-difference sets.  While both of these cases have
interesting examples and open existence questions, in this paper we
will be concerned with cyclic relative difference sets which are lifts
of nontrivial difference sets.  The only ones known have the
parameters
of complements of classical Singer difference sets
\cite{adlm}:

\begin{theorem}\label{thm:singer}
For $q$ a prime power,
a cyclic   $\left( \frac{q^d-1}{q-1},n,q^{d-1},q^{d-2}(q-1)/n
\right)$-RDS exists if and only if
$$
  \begin{array}{ll}
  n\, |\, 2(q-1), & {\rm if} \  q\ {\rm even\ and}\ d\ {\rm odd},\\
  n\,|\, (q-1), & {\rm else}.
  \end{array}
$$
\end{theorem}

Arasu, Jungnickel, Ma and Pott \cite{ajmp} show that many other difference
sets cannot have liftings with $n=2$, including:
\begin{itemize}
\item Paley difference sets
\item Twin prime power difference sets and their complements
\item difference sets with the parameters of classical Singer difference sets
\end{itemize}

They conjecture that only difference sets with parameters (\ref{eq:singercomp})
with $d$ odd
have liftings to relative difference sets with
$n=2$.
Pott \cite{pott} asked
whether other difference sets have liftings to
relative difference sets with {\em any} $n$.

Lam \cite{lam} gave a list of cyclic relative difference sets with $k \leq 50$.  Pott \cite{pott}
extended this (for Singer parameters) to $k \leq 64 $ for $n$ odd, and
also found all the $(121,2,81,27)$ liftings of the four cyclic
$(121,81,54)$-difference sets.  In this paper we will give the results
of further searches for lifts of difference sets with $k \leq 256$,
which found no liftings of any other
parameters
of difference sets.

A {\em weighing matrix} $M = M(n,k)$ is an $n \times n$ matrix with
entries in $\{-1,0,1\}$ such that $M M^T = kI_n$, where $M^T$ is the
transpose of $M$ and $I_n$ is the $n \times n$ identity matrix.  If
$M$ is cyclically symmetric (every row is the cyclic shift of the row
above), then $M$ is a {\em circulant weighing matrix}.  This is
equivalent to an element $W$
of the  group ring of $\Znum_n$ with each coefficient
in $\{-1,0,1\}$, satisfying
\begin{equation}\label{eq:cw}
  W W^{-1} = k,
\end{equation}
which is the same as (\ref{eq:gpring}) with $\lambda=0$.

In \cite{agz} it is shown that most known circulant weighing
matrices may be constructed either with a product construction using
two smaller matrices, or from a relative difference set.  In
Section~\ref{sec:cw} we will use results of RDS searches to
give new existence results for circulant weighing matrices.


\section{Nonexistence Theorems for Relative Difference Sets}\label{sec:nonexist}

There are a number of nonexistence results that can quickly eliminate
some parameters, many given by Lam in \cite{lam}.  That reference is not
widely available; we will not reproduce the proofs here, but note that
they are straightforward generalizations of results for difference
sets, with proofs given by Baumert in \cite{baumert}.
Theorem 3.1 of \cite{lam} is:

\begin{theorem}\label{thm:lam3.1}
  If a cyclic $(q^2+q+1,q^2,q^2-q)$-difference set (the complement of a
  $(2,q)$ Singer plane) has a lifting with $n = p^r n_1$, where $p$ is
  prime, $q = p^s$, $r \geq 1$, then
  $$
  n_1^e \equiv 1 \pmod {p^r},
  $$
  where $e = 3s/\gcd(3s,r)$.
\end{theorem}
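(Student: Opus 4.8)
The plan is to pass to the group ring and apply characters. Writing the putative lifting as an $(v,n,q^2,(q^2-q)/n)$-RDS $R$ relative to $N$, with $v=q^2+q+1$ and $n=p^r n_1$, equation (\ref{eq:rds_gpring}) reads $RR^{-1}=q^2+\frac{q^2-q}{n}(G-N)$. Applying a character $\chi$ that is nontrivial on $N$ annihilates both $\chi(G)$ and $\chi(N)$, leaving $|\chi(R)|^2=q^2=p^{2s}$, whereas characters trivial on $N$ factor through $G/N$ and recover the underlying difference set with $|\chi(R)|^2=q$. A first, cheap consequence of $\lambda=(q^2-q)/n\in\Znum$ is that $n\mid p^s(p^s-1)$, forcing $r\le s$ and $n_1\mid q-1$; in particular $n_1$ is a unit modulo $p^r$, and the theorem is the assertion that its order divides $e$.

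Next I would take $\chi$ faithful, so that $\chi(R)\in\Znum[\zeta]$ for $\zeta$ of order $vn=p^r m$ with $m=v n_1$ coprime to $p$, and $\chi(R)\overline{\chi(R)}=p^{2s}$. Factoring $p$ in $\Qnum(\zeta_m)$, it is unramified with residue degree $f=\mathrm{ord}_m(p)=\mathrm{lcm}(\mathrm{ord}_v(p),\mathrm{ord}_{n_1}(p))$, and it becomes totally ramified of degree $\phi(p^r)$ in $\Qnum(\zeta_{p^r m})$. The Singer relation $p^{3s}=q^3\equiv 1\pmod v$, with the order-$3$ Frobenius $\zeta_v\mapsto\zeta_v^{\,q}$ reflecting the dimension $d=3$ of Theorem~\ref{thm:singer}, pins down $\mathrm{ord}_v(p)$, while $n_1\mid q-1$ gives $\mathrm{ord}_{n_1}(p)\mid s$. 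From $|\chi(R)|^2=p^{2s}$ one reads the total valuations of $(\chi(R))$ at the primes $\mathfrak P$ above $p$, and I would combine this with the multiplier structure of the Singer-complement difference set carried by the projection of $R$ to $G/N$: since $p$ is a multiplier there, a substitution $\sigma_t$ with $t\equiv q\pmod v$ and $t$ a unit modulo $p^r n_1$ (constructed by CRT) fixes the relevant character value up to a root of unity.

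Comparing $\chi(R)$ with its image under $\sigma_t$, and matching how $\sigma_t$ permutes the primes above $p$ against how it acts through the $n_1$-component, produces a congruence tying $n_1$ to a power of $q$ modulo $p^r$. Because the Singer Frobenius has order $3$ and $q=p^s$, the least exponent trivializing this relation is the least $e$ with $3s\mid re$, that is $e=\mathrm{lcm}(3s,r)/r=3s/\gcd(3s,r)$, which yields $n_1^e\equiv 1\pmod{p^r}$.

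The hard part will be the bookkeeping in the last two steps. Establishing the correct Frobenius/multiplier equivariance for the \emph{full} RDS $R$, rather than only for its difference-set projection, and then reconciling the permutation of the primes $\mathfrak P$ under $\sigma_t$ with the total ramification of the $p^r$-part so that the $\mathfrak P$-valuations balance, is precisely where the three parameters $s$ (from $q=p^s$), the dimension contributing the factor $3$ (from $q^3\equiv 1\pmod v$), and the ramification exponent $r$ must be reconciled; this is what forces the combined exponent $e=3s/\gcd(3s,r)$. I would also isolate the edge case $\gcd(3,v)=3$, where $v$ and $n$ can share the factor $3$ and the CRT choice of $t$ needs separate care.
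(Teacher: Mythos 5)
First, a point of comparison: the paper contains no proof of this statement at all --- it is quoted verbatim as Theorem 3.1 of Lam \cite{lam} and used as a black-box nonexistence test --- so there is no in-paper argument to measure yours against, and I can only judge the attempt on its own terms.

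On those terms there is a genuine gap. Your first paragraph is correct and useful ($n\mid q(q-1)$ forces $r\le s$ and $n_1\mid q-1$; characters nontrivial on $N$ give $|\chi(R)|^2=q^2$, those trivial on $N$ give $|\chi(R)|^2=q$), and the ramification picture for $p$ in $\Qnum(\zeta_{vn})$ is set up correctly. But the decisive step --- that comparing $\chi(R)$ with $\sigma_t(\chi(R))$ ``produces a congruence tying $n_1$ to a power of $q$ modulo $p^r$'' --- is asserted, not derived, and the route you propose cannot in principle deliver it. Everything you can extract from the ideal $(\chi(R))$, its $\mathfrak{P}$-valuations, and their permutation under Galois substitutions $\sigma_t$ depends on $n_1$ only through the arithmetic of $\Qnum(\zeta_{vn})$, i.e.\ through quantities such as $\mathrm{ord}_{n_1}(p)$ and the splitting of $p$; that data is blind to the residue class of the \emph{integer} $n_1$ modulo $p^r$. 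The only channel through which $n_1 \bmod p^r$ can enter is the parameter $\lambda=(q^2-q)/n=p^{s-r}(q-1)/n_1$, since $(q-1)/n_1\equiv -n_1^{-1}\pmod{p^r}$; a congruence on $n_1$ therefore has to be harvested from integer identities involving $\lambda$ (intersection numbers as in Lemma~\ref{lem:contracted}, coefficients of the group ring equation reduced modulo powers of $p$, or orbit counts under a multiplier fixing $R$), and none of these appear anywhere in your sketch. Your formula for $e$ is obtained by pattern-matching (``the least $e$ with $3s\mid re$'') rather than by computation; you also invoke a multiplier of $R$ itself without verifying the hypotheses of Theorem~\ref{thm:lammult} that would let the multiplier $q$ of the underlying difference set act on the full RDS. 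Your closing paragraph candidly identifies the reconciliation of $s$, the Frobenius order $3$, and the ramification exponent $r$ as ``the hard part'' still to be done --- but that reconciliation \emph{is} the theorem. As written, this is a plausible research plan, not a proof.
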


Let $p$ be a prime, and  let $w = p^e w_1$, with $\gcd(p,w_1)=1$.
We say that $p$ is {\em   self-conjugate modulo $w$} if there is an integer $f>0$ such that
$p^f \equiv -1 \pmod {w_1}$.  A composite $u$ is self-conjugate mod
$w$ if all of its prime divisors are.

Theorem 3.2 of \cite{lam} is:

\begin{theorem}\label{thm:lam3.2}
  Let $R$ be a cyclic $(m,n,k,\lambda)$-RDS, and $w>1$ be a divisor of $mn$,
  with $w \dnd m$.
  If there exists $u>1$ self-conjugate modulo $w$
  satisfying $u^2|k$, then
  $$
  u \leq 2^{s-1} (mn/w),
  $$
  where $s$ is the number of distinct prime factors of $w$.
\end{theorem}

Theorem 3.5 of \cite{lam} is:

\begin{theorem}\label{thm:lam3.5}
  Let $R$ be a cyclic $(m,n,k,\lambda)$-RDS, and let $q$ be a prime divisor of
  $mn$, where
  $q^e \parallel mn$, $q^e \dnd m$, and $q \equiv 3 \pmod 4$.
  If every prime $p|k$ satisfies one of the conditions:
  \begin{enumerate}
  \item the order of $p$ mod $q$ is even,
  \item the order of $p$ mod $q$ is $q^{e-1}(q-1)/2$, or
  \item $p=q$,
  \end{enumerate}
  then the Diophantine equation
  $$
  4k = x^2 + q y^2
  $$
  has a solution in integers $x$ and $y$.
\end{theorem}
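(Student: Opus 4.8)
The plan is to apply a character of order $q^e$ to the group–ring identity (\ref{eq:rds_gpring}) and then to descend the resulting algebraic integer to the imaginary quadratic subfield $K=\Qnum(\sqrt{-q})$ of the cyclotomic field $L=\Qnum(\zeta_{q^e})$. The whole argument hinges on the fact that $K\subseteq L$ precisely because $q\equiv 3\pmod 4$ (the quadratic subfield of $\Qnum(\zeta_{q^e})$ is $\Qnum(\sqrt{q^*})$ with $q^*=(-1)^{(q-1)/2}q=-q$, realized by the quadratic Gauss sum), so that the target form $x^2+qy^2$ is the norm form of $\cO_K$.

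First I would fix a character $\chi$ of $G$ of order exactly $q^e$. Such a $\chi$ exists because $q^e\mid mn$, and since $q^e \dnd v$ it cannot factor through $G/N$, so $\chi$ is nontrivial on $N$. Hence $\chi(G)=\chi(N)=0$, and evaluating (\ref{eq:rds_gpring}) gives $\alpha:=\chi(R)\in\Znum[\zeta_{q^e}]=\cO_L$ with $\alpha\overline{\alpha}=k$. Next I would read off the behaviour of each prime $p\mid k$ in $K$ from the cyclic group $(\Znum/q^e)^\times\cong\mathrm{Gal}(L/\Qnum)$, using that complex conjugation is $\sigma_{-1}$ and that $\br{p}$ is the decomposition group of $p$. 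Read with the orders taken modulo $q^e$, conditions (1)--(3) say exactly that $\br{p}$ contains $-1$, or equals the index-two subgroup of squares, or $p=q$; equivalently $p$ is inert in $K$ (with the primes of $L$ over $p$ self-conjugate), or $p$ splits in $K$ with its two prime factors remaining prime in $L/K$, or $p$ ramifies, with $(q)=\mathfrak Q^2$ and $\mathfrak Q=(\sqrt{-q})$.

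The descent step is then forced by $\alpha\overline{\alpha}=k$. For an inert $p$, self-conjugacy gives $v_{\mathfrak p}(\alpha)=v_{\mathfrak p}(\overline{\alpha})$ for every prime $\mathfrak p\mid p$ of $L$, so each occurs in $(\alpha)$ to the common exponent $v_p(k)/2$ (in particular $v_p(k)$ is even); in the split case the factor of $K$ stays prime in $L$, and in the ramified case there is a unique prime over $q$, so in all three cases the exponents are constant along the fibres over $K$. Consequently $(\alpha)=\mathfrak b\,\cO_L$ for an ideal $\mathfrak b$ of $\cO_K$ satisfying $\mathfrak b\,\overline{\mathfrak b}=(k)$, i.e. $N_{K/\Qnum}(\mathfrak b)=k$. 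To finish I must pass from this \emph{ideal} norm to an \emph{element} norm: if $\mathfrak b=(\beta)$ is principal, then $\beta\overline{\beta}=k$ up to a positive real unit of $\cO_K$, hence $=k$, and writing $\beta=(x+y\sqrt{-q})/2$ with $x\equiv y\pmod 2$ yields $4k=x^2+qy^2$.

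The hard part is exactly this last point: the equation $4k=x^2+qy^2$ asks for representability by the \emph{principal} class, which is strictly stronger than the ideal relation $\mathfrak b\,\overline{\mathfrak b}=(k)$ and in general is obstructed by the class group of $K$. I would clear this obstruction by observing that $\mathfrak Q$ is the only ramified prime of the abelian extension $L/K$ and is \emph{totally} ramified there. By the standard lemma (ubiquitous in Iwasawa theory) that a totally ramified prime forces the ideal-extension map $\mathrm{Cl}(K)\to\mathrm{Cl}(L)$ to be injective, the principal ideal $(\alpha)=\mathfrak b\,\cO_L$ of $\cO_L$ pulls back to a principal $\mathfrak b$, completing the argument. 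This is consistent with the genus-theoretic fact that $h_K$ is odd when $q\equiv 3\pmod 4$ is prime, the unique ramified prime of $K$; establishing the principality of $\mathfrak b$ is where essentially all of the number-theoretic content of the theorem resides.
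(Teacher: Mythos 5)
The paper does not prove this statement --- it is quoted verbatim as Theorem~3.5 of Lam --- so your proposal can only be measured against the classical argument (due essentially to Yamamoto, which Lam follows). Your architecture is exactly that argument: a character of order $q^{e}$ nontrivial on $N$ gives $\alpha\in\Znum[\zeta_{q^e}]$ with $\alpha\bar\alpha=k$; conditions (1)--(3) are correctly decoded as ``$-1\in\langle p\rangle$'', ``$\langle p\rangle=\mathrm{Gal}(L/K)$'', and ``$p=q$ ramified''; and the exponent bookkeeping showing $(\alpha)=\mathfrak b\,\cO_L$ with $N_{K/\Qnum}(\mathfrak b)=k$ is sound. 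You also correctly isolate where the content lies: passing from the ideal relation to representability by the principal form.

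The genuine gap is precisely at that last step. There is no ``standard lemma'' asserting that a totally ramified prime forces $\mathrm{Cl}(K)\to\mathrm{Cl}(L)$ to be injective; the ubiquitous consequence of total ramification (via $L\cap H_K=K$) is that the \emph{norm} map $\mathrm{Cl}(L)\to\mathrm{Cl}(K)$ is \emph{surjective}. What that buys for your map $j$ is only $N\circ j=[L:K]$, hence $\ker j\subseteq \mathrm{Cl}(K)[[L:K]]$ with $[L:K]=q^{e-1}(q-1)/2$ --- and this torsion subgroup need not vanish, since $(q-1)/2$ can share a factor with $h(-q)$ (e.g.\ $q=71$, where $h(-71)=7$ divides $(q-1)/2=35$, so $N\circ j$ is the zero map on $\mathrm{Cl}(K)$ and gives no information). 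Capitulation in ramified extensions is a real phenomenon, and the obstruction is not illusory: for $q=23$ the prime $2$ satisfies condition (2) yet $8=x^2+23y^2$ is insoluble, so the conclusion genuinely uses the existence of $\alpha$, not just the splitting conditions. The correct way to close the gap is the capitulation theorem for CM fields (Washington, \emph{Introduction to Cyclotomic Fields}, Thm.~10.3): for CM fields $K\subseteq L$ the kernel of $\mathrm{Cl}^-(K)\to\mathrm{Cl}^-(L)$ has order at most $2$. Here $\mathrm{Cl}^-(K)=\mathrm{Cl}(K)$ since $K^+=\Qnum$, and genus theory gives $h(-q)$ odd because $-q$ is a prime discriminant, so the kernel is trivial and $\mathfrak b$ is principal. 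With that substitution your proof is complete; as written, the step you call ``where essentially all of the number-theoretic content resides'' rests on a false lemma.
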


Finally, we give Result 2.5 of Pott \cite{pott1996survey}.  Here
the symbol $(a,b)_p$ is the Hilbert symbol:
$$
(a,b)_p = \left\{
  \begin{array}{ll}
  1 & {\rm if\ } a x^2 + b y^2 \equiv 1 \pmod {p^r} {\rm \ has \ a
    \ rational\ solution \ for \ every}\  r,\\
  -1 & {\rm else}.
  \end{array}
  \right.
$$

\begin{theorem}\label{thm:pott2.5}
  If an abelian $(m,n,k,\lambda)$-RDS exists, then the following holds:
  \begin{enumerate}
  \item if $m$ is even, then $k-n\lambda$ is a square.  If $m \equiv 2
    \pmod 4$, and $n$ is even, then $k$ is the sum of two squares.
  \item If $m$ is odd and $n$ is even, then $k$ is a square and
    $$
    (k-n\lambda,(-1)^{(n-1)/2} n \lambda )_p = 1
    $$
    for all odd primes $p$.
  \item If both $m$ and $n$ are odd, then
    $$
    (k,(-1)^{(n-1)/2} n )_p \cdot (k-n\lambda,(-1)^{(m-1)/2} n \lambda )_p = 1
    $$
    for all odd primes $p$.
  \end{enumerate}
\end{theorem}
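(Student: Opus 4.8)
The plan is to prove the three numbered assertions by the rational--congruence method that underlies the Bruck--Ryser--Chowla theorem, applied to the divisible design developed from $R$; the displayed equation $4k=x^2+qy^2$ printed after part (3) is a transcription artifact carried over from Theorem~\ref{thm:lam3.5} and plays no role in the conclusion. Throughout I write $v=mn$ and let $(\,\cdot\,,\cdot\,)_p$ denote the Hilbert symbol at $p$. First I would pass from the group ring identity (\ref{eq:rds_gpring}) to a matrix equation. Developing $R$ over $G$ yields an integral $v\times v$ matrix $A$ with
$$ A A^{T} = k\,I_v + \lambda\,(J - B), $$
where $J$ is the all--ones matrix and $B_{g,h}=1$ precisely when $gh^{-1}\in N$. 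Since $N$ is normal of order $n$, after reindexing $B=I_m\otimes J_n$ and $J=J_m\otimes J_n$, so on $\mathbb{Q}^m\otimes\mathbb{Q}^n$ we have $AA^{T}=k(I_m\otimes I_n)+\lambda\,(J_m-I_m)\otimes J_n$.

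Splitting the second tensor factor as $\langle\mathbf 1_n\rangle\oplus\mathbf 1_n^{\perp}$ gives a rational, $AA^T$--orthogonal decomposition (because $I_n$ and $J_n$ both preserve it), which block--diagonalises the form: on $\mathbb{Q}^m\otimes\langle\mathbf 1_n\rangle$ it restricts to $n\big[(k-n\lambda)I_m+n\lambda J_m\big]$, and on $\mathbb{Q}^m\otimes\mathbf 1_n^{\perp}$ (dimension $m(n-1)$) it restricts to $k\,(I_m\otimes G_n)$, where $G_n$ is the Gram form of a rational basis of $\mathbf 1_n^{\perp}$. The first block is exactly the incidence equation $NN^T=(k-n\lambda)I_m+n\lambda J_m$ of the quotient $(m,k,n\lambda)$--difference set supplied by Theorem~\ref{thm:lift}, so part of the work reduces to classical Bruck--Ryser--Chowla for that design.

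Next I would exploit that $AA^{T}=A\,I_v\,A^{T}$ is rationally congruent to $I_v$. Hence the diagonalisation of the two blocks, after further reducing the first block along $\langle\mathbf 1_m\rangle\oplus\mathbf 1_m^{\perp}$ and applying Witt cancellation, must be congruent to $\langle 1\rangle^{v-1}$; this forces the Hasse--Minkowski invariants to agree: equal rank, equal signature (all of $k,\ k-n\lambda,\ m,\ n$ are positive), equal discriminant modulo squares, and equal Hasse symbol at every prime. The square conditions come from the discriminant/signature comparison: when $m$ is even the factor $(k-n\lambda)^{m-1}$ carries an odd exponent, forcing $k-n\lambda$ to be a square (part (1)); when $m$ is odd and $n$ is even the odd exponent falls instead on $k$, forcing $k$ to be a square (part (2)). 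The Hilbert--symbol identities in (2) and (3) come from setting the Hasse symbol equal to $1$ at each odd prime and simplifying with $(x,x)_p=(x,-1)_p$ and bimultiplicativity; the sign $(-1)^{(m-1)/2}$ is the discriminant of the odd--dimensional $m$--block (the quotient design) and $(-1)^{(n-1)/2}$ that of the $n$--direction, while the stray factors $n$ and $n\lambda$ inside the symbols are produced by the determinants of the Gram forms on $\mathbf 1_m^{\perp}$ and $\mathbf 1_n^{\perp}$, each equal to its dimension plus one.

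The delicate part, and the main obstacle, is the local computation at $p=2$ together with the exact sign bookkeeping. The Gram forms on $\mathbf 1_m^{\perp}$ and $\mathbf 1_n^{\perp}$ are not diagonal, so one must diagonalise them (for instance by consecutive leading minors, giving entries $j(j+1)$) and track their Hasse symbols; it is precisely these that convert the naive symbols $(k-n\lambda,(-1)^{(m-1)/2})_p$ and $(k,(-1)^{(n-1)/2})_p$ into the stated ones carrying $n\lambda$ and $n$. The dichotomy in part (1) between ``$k-n\lambda$ is a square'' and ``$k$ is a sum of two squares'' when $m\equiv2\pmod4$ and $n$ is even is exactly the $2$--adic Hasse invariant refining the global determinant condition, and getting these $2$--adic symbols right is where the care is needed.
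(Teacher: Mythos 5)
The paper does not actually prove this statement: it is quoted (including, as you correctly diagnose, the dangling clause ``then the Diophantine equation $4k=x^2+qy^2$ has a solution'', a copy-and-paste artifact from Theorem~\ref{thm:lam3.5}) from Pott's survey, where it is in turn a specialization of the Bose--Connor theorem on symmetric group divisible designs to the design developed from an RDS. Your outline follows exactly that route: develop $R$ into $AA^{T}=k(I_m\otimes I_n)+\lambda(J_m-I_m)\otimes J_n$, split along $\langle\mathbf 1_n\rangle\oplus\mathbf 1_n^{\perp}$, identify the first block (up to the scalar $n$) with the Bruck--Ryser--Chowla form of the quotient $(m,k,n\lambda)$-design supplied by Theorem~\ref{thm:lift}, and compare Hasse--Minkowski invariants with the identity form. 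So the strategy is the right one and matches the source. (The identity $k^2-mn\lambda=k-n\lambda$, which explains why $k-n\lambda$ plays the role of $k^2-v\lambda_2$ in Bose--Connor, deserves to be stated explicitly.)

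The gap is that the proposal defers essentially all of the theorem's content to a computation it does not perform. The square conditions, the sum-of-two-squares condition when $m\equiv2\pmod 4$ and $n$ is even, and the two displayed Hilbert-symbol identities are precisely the output of evaluating the discriminants and Hasse symbols of the two blocks at every place (including $p=2$ and the real place) and applying Hasse--Minkowski; you describe where each factor ``should'' arise but explicitly flag the $2$-adic and sign bookkeeping as ``the main obstacle'' and leave it undone, so nothing in the write-up certifies that the symbols come out as $(k-n\lambda,(-1)^{(n-1)/2}n\lambda)_p$ and $(k,(-1)^{(n-1)/2}n)_p\cdot(k-n\lambda,(-1)^{(m-1)/2}n\lambda)_p$ rather than some variant. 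Two further points need attention: the semiregular case $k=n\lambda$ makes $AA^{T}$ singular, so the rational congruence to $I_v$ must be replaced by an argument on the nonsingular part; and the passage from ``the $2$-adic Hasse invariant'' to the classical statement ``$k$ is a sum of two squares'' in part (1) requires the product formula over all places, not just the local condition at $2$. As it stands this is a correct plan for reproving Bose--Connor, not a proof.
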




Using these theorems, we will show in the next section
that many difference sets do not
have liftings to relative difference sets.

\section{Multipliers}\label{sec:mult}

One of the main tools for studying difference sets  is {\em
  multipliers}.
Let $\{d_1, d_2,\ldots,d_k\}$ be a
$(v,k,\lambda)$-difference set in an abelian group $G$.
An integer $t$
is called a
{\em multiplier} if multiplying the elements of $D$ by $t$
result in a translate of $D$:
$$
D^{(t)} = \{ t d_1, \ldots td_k\} = \{d_1+s,\ldots d_k+s\}  = D+s
$$
for some $s \in G$.  It is well known that some translate of
a difference set $D$ is fixed by all of its multipliers.
There are numerous theorems giving conditions for an integer $t$ to be
a multiplier; see \cite{gs}.

Theorem 1.3.5 of \cite{pott}
gives a condition for a multiplier of a
difference set to extend to a relative difference set:

\begin{theorem}\label{thm:lammult}
  Let $R$ be an $(m,n,k,\lambda)$-RDS in an abelian group $G$ of
  exponent $v^*$ relative to $N$.  Let $t$ be an integer relatively
  prime to $v=mn$
which is a multiplier of the underlying
  $(m,k,n\lambda)$-difference set.
Let $k_1$ be a divisor of $k$, with prime factorization
  $k_1 = p_1^{e_1} \cdots p_r^{e_r}$, and $k_2 = k_1/\gcd(v,k_1)$.
  For each $p_i$, define
  $$
  q_i = \left\{
  \begin{array}{ll}
  p_i & {\rm if}\  p_i {\rm \ does \ not \  divide}\  v \\
  l_i & {\rm if}\  v^* = p_i^r u_i,\  \gcd(p_i,u_i)=1, \ {\rm
    where}\  l_i \ 
  {\rm is \ an \ integer \  such \ that \ }\\
  & \gcd(l_i,p_i)=1 \ {\rm and}\ \  l_i \equiv p_i^f \pmod   u_i.
  \end{array}
  \right.
  $$
  If $k_2 > \lambda$, and for each $i$ there exists an integer $f_i$
  such that $ q_i^{f_i} \equiv t \pmod {v^*}$,
 then $t$ is a multiplier of $R$.
\end{theorem}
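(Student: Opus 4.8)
The plan is to pass to the character ring and reduce the multiplier property to a statement about the Galois action on character values. Write $v^* = \exp(G)$, fix a primitive $v^*$-th root of unity $\zeta$, and for $t$ coprime to $v^*$ let $\sigma_t \in \mathrm{Gal}(\Qnum(\zeta)/\Qnum)$ be the automorphism $\zeta \mapsto \zeta^t$. The first observation is that for every character $\chi$ of $G$ one has $\chi(R^{(t)}) = \sigma_t(\chi(R))$, since $\chi(g^t) = \chi(g)^t = \sigma_t(\chi(g))$. Thus $t$ is a multiplier exactly when $R^{(t)}$ is a translate of $R$, and by Fourier inversion it suffices to produce a single $s \in G$ with $\sigma_t(\chi(R)) = \chi(s)\chi(R)$ for all $\chi$. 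After first replacing $R$ by the translate whose image $D = \rho(R)$ in $G/N$ is the translate of the underlying $(m,k,n\lambda)$-difference set fixed by all its multipliers, I may assume $D^{(t)} = D$.

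Next I would split the characters according to their restriction to $N$. If $\chi$ is trivial on $N$ then it factors through $G/N$ and $\chi(R) = \bar\chi(D)$, so the assumption that $t$ is a multiplier of $D$ with $D^{(t)} = D$ gives $\sigma_t(\chi(R)) = \chi(R)$ immediately. The whole content therefore lies in the characters $\chi$ nontrivial on $N$. For these the group-ring equation (\ref{eq:rds_gpring}) gives $\chi(G) = \chi(N) = 0$, hence $\chi(R)\overline{\chi(R)} = k$. Because $\mathrm{Gal}(\Qnum(\zeta)/\Qnum)$ is abelian it commutes with complex conjugation, and since every Galois conjugate $\chi^j(R)$ (with $\gcd(j,v^*)=1$) again comes from a character nontrivial on $N$ — the restriction $\chi^j|_N$ has the same order as $\chi|_N$ — every conjugate of $\chi(R)$ has absolute value $\sqrt{k}$ under every embedding.

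The heart of the argument is to show that $\sigma_t$ fixes the ideal $(\chi(R))$ in $\Znum[\zeta]$; granting this, $\sigma_t(\chi(R))/\chi(R)$ is a unit all of whose conjugates have absolute value $1$, so by Kronecker's theorem it is a root of unity and $\sigma_t(\chi(R)) = \omega_\chi\,\chi(R)$. Since $(\chi(R))$ divides $(k)$, it suffices to check that $\sigma_t$ fixes every prime ideal of $\Znum[\zeta]$ lying over each rational prime $p_i \mid k_1$, and here the definition of $q_i$ does exactly the right thing: when $p_i \nmid v$ the automorphism $\sigma_{q_i} = \sigma_{p_i}$ is the Frobenius at $p_i$, which lies in the decomposition group of every prime above $p_i$; when $p_i \mid v$, writing $v^* = p_i^r u_i$, the totally ramified $p_i^r$-part contributes the inertia group while the condition $l_i \equiv p_i^{f}\pmod{u_i}$ makes $\sigma_{q_i} = \sigma_{l_i}$ act as a power of the Frobenius on the unramified $u_i$-part, so again $\sigma_{q_i}$ lies in each decomposition group and fixes every prime over $p_i$. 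The hypothesis $q_i^{f_i}\equiv t \pmod{v^*}$ then gives $\sigma_t = \sigma_{q_i}^{f_i}$, which therefore also fixes every prime over $p_i$; running over all $i$ (the coprime-to-$v$ part being recorded by $k_2$) shows $\sigma_t$ fixes the whole factorization of $(\chi(R))$.

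Finally I would assemble the local information into a single translation. The twists $\omega_\chi$ are trivial on the characters trivial on $N$, so the only freedom is over the characters nontrivial on $N$, and comparing character values shows that the group-ring element $R^{(t)}R^{(-1)}$ equals $k\,s + \lambda(G - sN)$ for a uniquely determined $s$ lying in $N$ (the stabiliser of the fixed difference set $D$ being trivial forces $\rho(s)=e$, hence $s \in N$); this is precisely the statement $R^{(t)} = sR$, i.e.\ that $t$ is a multiplier of $R$. The main obstacle I expect is twofold: first, the ramified case $p_i \mid v$ in the ideal-fixing step, where one must argue carefully with the decomposition and inertia groups of $\Qnum(\zeta)$ rather than a bare Frobenius, and this is exactly what the nonobvious definition of $q_i$ via $l_i \equiv p_i^f \pmod{u_i}$ is engineered to handle; and second, the globalization step, where one must check that the independently obtained roots of unity $\omega_\chi$ are coherent enough to come from one group element, which is where positivity of the coefficients of $R$ (equivalently, a lifting of the congruence information recorded by $k_2$) is used.
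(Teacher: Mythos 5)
The paper itself gives no proof of this statement---it is quoted verbatim as Theorem 2.1 of Lam---so the comparison can only be against the standard argument. Judged on its own terms, your proposal has a genuine gap at its central step. You reduce everything to showing that $\sigma_t$ fixes the ideal $(\chi(R))$ in $\Znum[\zeta]$, and justify this by saying it ``suffices to check that $\sigma_t$ fixes every prime ideal lying over each rational prime $p_i \mid k_1$.'' But $(\chi(R))\,(\overline{\chi(R)})=(k)$ and $k_1$ is only assumed to be \emph{a divisor} of $k$: the ideal $(\chi(R))$ may be divisible by primes of $\Znum[\zeta]$ lying over rational primes dividing $k/k_1$, and the hypotheses give no control whatsoever over whether $\sigma_t$ fixes those. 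In the degenerate case $k_1=1$ the cyclotomic condition is vacuous and your argument would ``prove'' that every multiplier of the underlying difference set lifts to a multiplier of $R$, which is not what the theorem can mean. What the (correct) decomposition-group computation actually buys you is only the ideal divisibility $k_1 \mid \sigma_t(\chi(R))\overline{\chi(R)}$, i.e.\ a congruence of the form $R^{(t)}R^{(-1)}\equiv \lambda(G-N)$ modulo $k_1$ on the characters nontrivial on $N$---not the exact identity $\sigma_t(\chi(R))=\omega_\chi\chi(R)$.

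The second gap is the globalization step, which you assert rather than prove. Even where $\sigma_t(\chi(R))=\omega_\chi\chi(R)$ holds with $\omega_\chi$ a root of unity, one must show that the $\omega_\chi$ are coherently of the form $\chi(s)$ for a single $s\in G$ (in particular, one must exclude $\omega_\chi=-\zeta^j$ when $v^*$ is odd), and ``comparing character values'' does not accomplish this. In the classical multiplier-theorem template this is exactly where one uses the congruence modulo $k_1$, the nonnegativity of the coefficients of $R^{(t)}R^{(-1)}$, and a lower bound of the shape $k_2>\lambda$ to force the quotient group-ring element to be a single translate $s\bigl(k+\lambda(G-N)\bigr)$; it is telling that the statement defines $k_2$ yet your proof never uses it or any lower bound on $k_1$---that hypothesis (present in Lam but dropped in the paper's transcription) is precisely what your argument is missing. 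Two smaller points: the translating element $s$ need not lie in $N$, since the stabilizer of the fixed translate of $D$ in $G/N$ need not be trivial (nothing requires $s\in N$ anyway); on the positive side, your treatment of the decomposition groups themselves---Frobenius for $p_i\nmid v$, and the choice $l_i\equiv p_i^f\pmod{u_i}$ to land in the decomposition group of the ramified primes---is correct and is indeed the point of the definition of $q_i$.
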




Theorem 1.3.8 of \cite{pott}
gives a condition for a translate of $R$ to be fixed by one or all multipliers.

\begin{theorem}
  Let $R$ be an  abelian $(m,n,k,\lambda)$-RDS with
  multiplier $\tau$.  If $R$ not a lift of a
  $(m,m,m)$-difference set, then at least one translate of $R$ is
  fixed by $\tau$.  If $mn$ and $k$ are relatively prime, then at
  least one translate of $R$ is fixed by {\em all} multipliers.
\end{theorem}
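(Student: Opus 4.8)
The plan is to work in the group ring $\Znum[G]$ and use the group ring equation \eqref{eq:rds_gpring}, namely $R R^{-1} = k + \lambda(G-N)$, together with the standard averaging argument that already proves the analogous statement for difference sets. Recall that a multiplier $\tau$ of $R$ means $R^{(\tau)} = R + s_\tau$ for some $s_\tau \in G$; the goal is to produce a translate $R' = R + s$ with $(R')^{(\tau)} = R'$, i.e. a fixed translate.

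First I would treat the regular case. The natural approach is to sum all translates: consider the element $T = \sum_{g \in G}(R+g)$, which equals $k \cdot G$, and look instead at a suitable weighted average of the shifts $s_\tau$ produced by iterating $\tau$. Concretely, since $\tau$ is a unit modulo $v=mn$, applying the automorphism $g \mapsto g^\tau$ repeatedly cycles through an orbit, and one tries to solve $(1-\tau)s \equiv \text{(the translation vector)}$ in $G$ so that after translating by $s$ the set is fixed. The obstruction to solving this is exactly whether the relevant map $g \mapsto g^{\tau} - g$ (additively, multiplication by $\tau - 1$ on $G$) hits the required element. Here the hypothesis $k \neq n\lambda$ (regular) is what guarantees a unique fixed translate: summing $R$ over the orbit of $\tau$ and comparing coefficients, one finds that the ``center of mass'' of $R$ is forced into a single well-defined coset, and regularity prevents the degeneracy $R R^{-1} = k + \lambda(G - N)$ from collapsing the count. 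I expect the cleanest route is to mimic the classical proof (as in \cite{gs}): choose $s$ so that the coefficient sum of $R+s$ restricted to each $\tau$-orbit of $G$ is balanced, using that $\gcd(\tau,v)=1$ makes the orbit structure compatible with translation.

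For the second, stronger conclusion, assume $\gcd(mn,k)=1$. Now I would show simultaneously for \emph{all} multipliers. The key step is that when $k$ is invertible modulo $v$, one can divide by $k$ in the relevant quotient: from $R R^{-1} = k + \lambda(G-N)$ one extracts that the element $\hat{s} := k^{-1}\sum_{d \in R} d \in G$ (the normalized sum of the elements of $R$, well-defined since $k^{-1}$ exists mod $v$) is \emph{canonically} determined and is fixed by every automorphism $g \mapsto g^\tau$ that preserves $R$ up to translation. Translating $R$ so that this canonical sum becomes the identity yields a single translate $R'$ with $\sum_{d \in R'} d = 0$, and any multiplier $\tau$ must then send $R'$ to a translate with the same (zero) normalized sum, forcing $(R')^{(\tau)} = R'$. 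The point is that the single normalization works for every $\tau$ at once, which is why the coprimality $\gcd(mn,k)=1$ upgrades ``one translate per multiplier'' to ``one translate for all multipliers.''

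The main obstacle, and the step needing the most care, is the regular case where $\gcd(mn,k)>1$: there the normalized sum $k^{-1}\sum d$ does not exist in $G$, so the clean canonical-center argument fails and one must instead argue orbit-by-orbit, verifying that the linear equation $(\tau-1)s = s_\tau$ is solvable in $G$ precisely because $k \neq n\lambda$ rules out the problematic eigenvalue behavior of $\tau-1$ acting on $G$. I would isolate this as the technical heart, handle it via the group ring identity and character sums over $G$ (evaluating each nontrivial character $\chi$ on both sides of \eqref{eq:rds_gpring} to pin down $|\chi(R)|$ and hence the admissible translates), and reserve the coprime case as the easy corollary of the existence of $k^{-1} \bmod v$.
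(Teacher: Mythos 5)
The paper does not actually prove this statement --- it quotes it verbatim as Theorem 4.2 of Pott's survey \cite{pott} --- so your proposal has to stand on its own. The second half does: when $\gcd(mn,k)=1$ you translate $R$ so that $\sum_{d\in R}d=0$, and then any multiplier $\tau$ sends this translate to a translate $R'+u$ whose element sum is $ku$, forcing $ku=0$ and hence $u=0$; that is the standard argument and it is complete.

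The regular case, however, has a genuine gap. You correctly reduce the existence of a fixed translate to the solvability of $(\tau-1)t=-s_\tau$ in $G$, but you never establish solvability; you only assert that regularity ``rules out the problematic eigenvalue behavior of $\tau-1$'' and defer to unspecified character sums. This is exactly the hard point, and the direct approach does not go through: all one knows a priori is that $s_\tau$ lies in the kernel of the norm map $1+\tau+\cdots+\tau^{e-1}$ (with $e$ the order of $\tau$), and the inclusion of that kernel in the image of $\tau-1$ is a vanishing-of-cohomology statement that fails for general abelian groups acted on by a cyclic group, so input from the design structure is indispensable. The argument that actually works mirrors the classical difference-set case: the development $\{R+g: g\in G\}$ is a square divisible design whose incidence matrix $M$ satisfies $MM^T=kI+\lambda(J-B)$, where $B$ records the partition into $N$-cosets; evaluating at characters of $G$ shows the eigenvalues of $MM^T$ are $k^2$, $k$, and $k-n\lambda$, so $M$ is nonsingular precisely when $R$ is regular. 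A multiplier induces permutation matrices $P$ on points and $Q$ on blocks with $PM=MQ$, hence $\mathrm{tr}(Q)=\mathrm{tr}(M^{-1}PM)=\mathrm{tr}(P)$: $\tau$ fixes exactly as many translates as group elements, and since it fixes the identity it fixes at least one translate. Your ``center of mass'' heuristic cannot substitute for this, since (as you note yourself) $k^{-1}\sum_{d\in R}d$ is undefined precisely in the case at issue; the eigenvalue computation above is the missing mechanism by which $k\neq n\lambda$ enters.
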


For $w | mn$, let $\overline{D}$ be the natural map from
$\Znum_{mn}$ to $\Znum_w$, reducing
modulo $w$.
Define a {\em $w$-multiplier} to be an integer $t$
coprime to $w$ for which there an $s \in \Znum_w$ satisfying
$$
\overline{D}^{(t)} = \overline{D}+s.
$$
Theorem 3.4 of \cite{lam} gives more conditions on cyclic relative difference
sets with such multipliers:

\begin{theorem}\label{thm:lam3.4}
  If a cyclic $(m,n,k,\lambda)$-RDS exists, let $w|mn$ such that $w \dnd m$,
  and let $t$ be a $w$-multiplier.  Let
  $t^f \equiv -1 \pmod w$ for some integer $f$.  Then either $k$ is a
  square, or $k = k_1^2 q$ for some prime $q|w$.  In the latter case,
  \begin{enumerate}
  \item if $p \neq q$ is a prime, and $p^\alpha \parallel w$, then
    $p^\alpha | m$,
  \item if $q$ is odd and $q \dnd m$, then $q \equiv 1 \pmod 4$,
  \item if $q=2$, then $4|w$, and
  \item if $q \dnd m$, then $t$ is a quadratic residue modulo $q$.
  \end{enumerate}
\end{theorem}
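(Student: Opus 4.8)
The plan is to transport the problem into the cyclotomic field $\Qnum(\zeta_w)$, where $\zeta_w$ is a primitive $w$-th root of unity, and to read the hypothesis $t^f \equiv -1 \pmod w$ as a self-conjugacy condition. First I would pass to the group ring $\Znum[H]$, $H=\Znum/w\Znum$, that appears in the definition of a $w$-multiplier, and let $\bar R$ be the image of $R$. Projecting the RDS equation (\ref{eq:rds_gpring}) gives $\bar R\,\bar R^{(-1)} = k + \lambda\frac{mn}{w}H - c\,N'$ for a positive integer $c$, where $N'$ is the image of the forbidden subgroup $N$ in $H$. The hypothesis $w \dnd m$ guarantees that $N'$ is nontrivial. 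Evaluating a faithful character $\chi$ of $H$ (one of order exactly $w$), the term $\chi(H)$ vanishes, and because $\chi$ restricts nontrivially to $N'$ so does $\chi(N')$; hence $\theta:=\chi(\bar R)\in\Znum[\zeta_w]$ satisfies $\theta\bar\theta=|\theta|^2=k$.

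Next I would use the multiplier. By definition of a $w$-multiplier, $\bar R^{(t)}=s\,\bar R$ for some $s\in H$, so applying $\chi$ yields $\sigma_t(\theta)=\chi(s)\,\theta$, where $\sigma_t$ is the automorphism $\zeta_w\mapsto\zeta_w^t$ of $\Qnum(\zeta_w)$. Iterating $f$ times and using $\sigma_t^f=\sigma_{-1}$ (complex conjugation, since $t^f\equiv-1\pmod w$) gives $\bar\theta=\eta\theta$, where $\eta=\prod_{i=0}^{f-1}\sigma_t^i(\chi(s))$ is a product of conjugates of the root of unity $\chi(s)$ and hence is itself a root of unity. Therefore the ideal $(\theta)$ is stable under complex conjugation, so $(\theta)^2=(\theta)(\bar\theta)=(k)$, and at the level of elements $\theta^2=k\,\zeta$ for a $w$-th root of unity $\zeta=\eta^{-1}$.

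The arithmetic of $(\theta)^2=(k)$ then constrains $k$. For a rational prime $p\mid k$ unramified in $\Qnum(\zeta_w)$ -- that is, $p\dnd w$ -- each prime above $p$ occurs to exponent $v_p(k)$ in $(k)$ and hence to exponent $v_p(k)/2$ in $(\theta)$, forcing $v_p(k)$ even. For a ramified prime with $p^\alpha\parallel w$ the ramification index is $\phi(p^\alpha)=p^{\alpha-1}(p-1)$, which is even except when $p^\alpha=2$; so $v_p(k)$ can be odd only for an odd $p\mid w$, or for $p=2$ with $4\mid w$ (when $2\parallel w$ the index is $1$ and $v_2(k)$ is again forced even). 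This already yields conclusion~(3). Thus the non-square part $k_0$ of $k$ is squarefree and supported on these ramified primes.

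To finish I would reduce to $k_0$ by replacing $\theta$ with $\theta/k_1$ (where $k=k_1^2k_0$), and identify the conjugation-stable square root of each $(p)$ with the ideal generated by the quadratic Gauss sum, which generates $\Qnum(\sqrt{p^*})$ with $p^*=(-1)^{(p-1)/2}p$. Reading $\theta^2=k\zeta$ modulo squares, and noting that a root of unity is congruent to $\pm1$ modulo squares, forces $\sqrt{\pm k_0}\in\Qnum(\zeta_w)$; comparing conductors with the Gauss-sum description then pins down the admissible $q$, the sign condition selecting $\Qnum(\sqrt{q})$ rather than $\Qnum(\sqrt{-q})$ being exactly $q\equiv1\pmod4$ (conclusion~(2)), and the splitting of the relevant prime giving that $t$ is a quadratic residue modulo $q$ (conclusion~(4)). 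The hard part, I expect, is conclusion~(1) together with the claim that $k_0$ is a \emph{single} prime rather than a product of several ramified primes. This is where $w\dnd m$ must be used quantitatively, through the order of $N'$ and the fact that the characters with $|\chi(\bar R)|^2=k$ are precisely those whose order does not divide $\gcd(m,w)$. Tracking which prime-power components of $w$ lie genuinely beyond $m$ (i.e.\ contribute to $N'$) should show that only one such prime $q$ can support the non-square part, and that every other $p^\alpha\parallel w$ must already divide $m$, which is conclusion~(1).
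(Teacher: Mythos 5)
First, a point of reference: the paper does not actually prove this statement anywhere --- it is imported verbatim as Theorem 3.4 of Lam's paper with only a citation --- so there is no in-text proof to compare yours against, and I can only judge your argument on its own terms. Your skeleton is the standard one and is correct as far as it goes: projecting to $\Znum[\Znum/w\Znum]$, noting that $w \nmid m$ makes the image $N'$ of $N$ nontrivial so that a faithful character $\chi$ kills both $H$ and $N'$ and gives $|\theta|^2=k$; using the $w$-multiplier together with $t^f\equiv -1\pmod w$ to get $\bar\theta=\eta\theta$ with $\eta$ a root of unity, hence $\theta^2=k\zeta$; and then reading off valuations from $(\theta)^2=(k)$. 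This legitimately yields that $v_p(k)$ is even for every $p$ unramified in $\Qnum(\zeta_w)$, and for $p=2$ unless $4\mid w$, which gives conclusion~(3) and the fact that the square-free part of $k$ is supported on odd primes dividing $w$.

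Everything after that, however, is asserted rather than proved, and there are two genuine gaps. (i) The single character $\chi$ cannot give you the single-prime claim or conclusion~(1); you must rerun the entire argument for a family of divisors $w'$ of $w$ (noting that $t$ is automatically a $w'$-multiplier with $t^f\equiv-1\pmod{w'}$, and that the argument applies precisely when $w'\nmid m$) and play them off against each other: if two primes $q_1\neq q_2$ both divided the square-free part, then applying the valuation argument with $w'=w/q_i^{\beta_i}$ would force $w/q_1^{\beta_1}\mid m$ and $w/q_2^{\beta_2}\mid m$, hence $w\mid m$, a contradiction; and $w'=w/q^{\beta}\mid m$ is exactly conclusion~(1). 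You name this idea but do not execute it. (ii) Your route to (2) and (4) does not work as stated: when $4\mid w$ the image of the roots of unity in $\Qnum(\zeta_w)^*/(\Qnum(\zeta_w)^*)^2$ is \emph{not} $\{\pm1\}$ (e.g.\ neither $i$ nor $-i$ is a square in $\Qnum(i)$), and even where it is, nothing in your sketch selects $\sqrt{q}\in\Qnum(\zeta_w)$ over $\sqrt{-q}\in\Qnum(\zeta_w)$, since both may occur when $w$ has several prime factors. The actual mechanism uses the hypothesis $q\nmid m$ of (2) and (4): take $w'=q^{\beta}$, so that $H'$ has odd order, $\chi'(s')$ and hence $\eta'$ are odd-order roots of unity and therefore squares, giving $(\theta'\gamma)^2=k$ exactly; then $\sqrt{q}$ lies in $\Qnum(\zeta_{q^{\beta}})$, whose unique quadratic subfield is $\Qnum(\sqrt{(-1)^{(q-1)/2}q})$, forcing $q\equiv 1\pmod 4$; and applying $\sigma_t$ to $\theta'\gamma=\pm k_1\sqrt{q}$ equates an odd-order root of unity with $\legendre{t}{q}=\pm1$, forcing $\legendre{t}{q}=1$. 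Without the odd-order observation, conclusions (2) and (4) do not follow from what you wrote.
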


For $w|mn$, let $b_i$ be the number of elements of an RDS equal to $i$ modulo
$w$.  The $b_i$ are called the {\em intersection numbers} (see Section VI.5
of \cite{bjl}), and equations involving them have been used to speed
searches for
difference sets
(see, for example, \cite{gaalgolomb}), relative difference sets
(\cite{pott}, Section 3.2), and circulant weighing matrices
\cite{agz}.  For relative difference sets, taking $w=n$ the equations become:

\begin{lemma}\label{lem:contracted}
For a cyclic $(m,n,k,\lambda)$-RDS $R$ with $d = \gcd(n,m)$,
\begin{align}
\sum_{i=0}^{n-1} b_i & =  k,\label{eq:lin}\\
\sum_{i=0}^{n-1} b_i^2 & = k + \lambda \cdot (m-d) ,\label{eq:quad}
\end{align}
where $b_i \leq m$.
\end{lemma}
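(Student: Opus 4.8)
The plan is to push the defining group-ring identity (\ref{eq:rds_gpring}) through the reduction-modulo-$n$ homomorphism and read both equations off a single coefficient comparison. Concretely, let $\phi\colon \Znum_{mn}\to\Znum_n$ be the epimorphism $x\mapsto x\bmod n$, extended $\Znum$-linearly to the group rings $\Znum[\Znum_{mn}]\to\Znum[\Znum_n]$. By the definition of the intersection numbers, $\phi(R)=\sum_{i=0}^{n-1}b_i X^i$, where $X$ generates $\Znum_n$. Since each residue class modulo $n$ contains exactly $m$ elements of $\Znum_{mn}$, every $b_i$ is a nonnegative integer bounded by $m$, giving $|b_i|\le m$. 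Equation (\ref{eq:lin}) is then immediate, as $\sum_i b_i=|R|=k$ (equivalently, apply the augmentation map, or the principal character of $\Znum_n$).

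For (\ref{eq:quad}) I would apply $\phi$ to (\ref{eq:rds_gpring}). Since $\phi$ is a ring homomorphism with $\phi(R^{-1})=\phi(R)^{-1}$ (where $R^{-1}=\sum_{r\in R} r^{-1}$ as in (\ref{eq:rds_gpring})), the left-hand side becomes $\phi(R)\phi(R)^{-1}=\sum_{i,j} b_i b_j X^{i-j}$, whose coefficient at the identity of $\Znum_n$ is exactly $\sum_{i=0}^{n-1} b_i^2$. It then remains to evaluate the image of the right-hand side and extract its identity coefficient. Here $\phi(k)=k$, and $\phi(G)=m\sum_{i=0}^{n-1}X^i$ because $\phi$ is $m$-to-one.

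The one step needing care — and the only place the statement is not purely mechanical — is computing $\phi(N)$ in the non-splitting case $d=\gcd(m,n)>1$. Here $N=\br{m}$ is the order-$n$ subgroup of $\Znum_{mn}$, and I would check that reduction modulo $n$ maps it $d$-to-one onto the subgroup $H\le\Znum_n$ of order $n/d$, so that $\phi(N)=d\sum_{x\in H} x$; in particular the identity coefficient of $\phi(N)$ is $d$, not $1$. Collecting the identity coefficients on the right-hand side then gives $k+\lambda(m-d)$, matching the left-hand side and establishing (\ref{eq:quad}); in the splitting case $d=1$ this specializes to the familiar $k+\lambda(m-1)$. As a consistency check, summing the squared-coefficient identity over all characters of $\Znum_n$ (Parseval) reproduces the same right-hand side, with the principal character supplying the basic counting relation $k^2=k+\lambda n(m-1)$.
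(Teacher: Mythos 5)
Your proof is correct and follows exactly the paper's argument: the paper's proof is the one-line remark that (\ref{eq:lin}) counts the elements of $R$ and (\ref{eq:quad}) comes from reducing (\ref{eq:rds_gpring}) modulo $n$ and reading off the coefficient of $0$. The only thing you add is the explicit verification that the reduction maps $N$ $d$-to-one onto a subgroup of $\Znum_n$, so the identity coefficient of the image of $N$ is $d$ — a detail the paper leaves implicit, and which you handle correctly.
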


\begin{proof}
The bound on the $b_i$ is obvious.
(\ref{eq:lin}) just says that $R$ has $k$ elements, and
  (\ref{eq:quad}) follows from reducing the group ring equation
(\ref{eq:rds_gpring}) modulo $n$ and evaluating the coefficient of $0$.
\end{proof}

These theorems give a way to investigate relative difference sets:
\begin{enumerate}
\item Start with an $(m,k,\lambda n)$ difference set, and find its set
  of multipliers $M_1$.
\item Check whether any of the theorems of the previous section
  exclude a lift to an $(m,n,k,\lambda)$-RDS.
\item Find multipliers  $M_2 = \{ t_1,t_2,\ldots,t_s\} \subset M_1 $ of $R$ using Theorem~\ref{thm:lammult}.
\item If $\gcd(mn,k)=1$, let $M$ be the subgroup of $G$ generated by
  $M_2$.  Otherwise, let $M$ be the subgroup generated by one $t_i$.
\item Search for a collection of orbits of $G/M$ which form an RDS.
\end{enumerate}

For example, consider the $(73,64,28)$-difference set.  Pott found the
unique $(73,7,64,4)$-relative difference set that it lifts to.  To find
lifts to a $(73,2,64,14)$-relative difference set, note that by the
First Multiplier Theorem, 2 is a multiplier
of the $(73,9,1)$-difference set and its complement, so the multiplier
group contains
the powers of two modulo 73: $\br{2}_9 = \{1,2,4,8,16,32,37,55,64\}$ (and in fact
that is the complete multiplier group $M_1$).  As in \cite{agz}, we will
use $\br{o}_s$ to denote the orbit of size $s$ generated by $o$.

The multiplier group $M$ of the RDS is $\br{75}_9$ (a lift of $M_1$ to $G=\Znum_{146}$).
The orbits of $G/M$ consist of $\br{0}_1$ and sixteen orbits of order
nine.  The $(73,64,28)$-difference set contains each orbit except
$\br{2}_9$, and each orbit is lifted to either $\br{0}_1$ or
$\br{1}_1$ in $\Znum_2$ (i.e. all elements in the orbit are unchanged,
or have $73$ added to them, respectively).

From Lemma~\ref{lem:contracted}, we have
$b_0 + b_1 = 64$, and $b_0^2 + b_1^2 = 2080$, with $0 \leq b_i \leq
73$.  There are two solutions: $(b_0, b_1) = (36,28)$ and $(28,36)$.
Without loss of generality we may take the first solution, and a
search reveals the unique RDS given in Table~\ref{tab:73}.

\begin{table*}
  \centering
\caption{Orbits in the $(73,2,64,28)$-RDS.  The rows correspond to
  orbits of $\Znum_{73}$, the columns to $\Znum_{2}$, and the
  entries in the table are the orbits of $(\Znum_{146})/M$ in the
  RDS.  The column sums are a set of $b_i$'s satisfying
  Lemma~\ref{lem:contracted}, and the row sums enforce it
  being a lifting of the $(73,64,28)$-DS.}
\label{tab:73}
\begin{tabular}{c||c|c||c}
 \multicolumn{3}{c}{\ \ \ $[2]$} \\
$[73]$ & $\br{ 0 }_{1}$  & $\br{ 1 }_{1}$ \\ \hline \hline
$\br{ 0 }_{1}$  &  & $\br{ 73 }_{1}$  & 1  \\ \hline
$\br{ 1 }_{9}$  &  &  & 0  \\ \hline
$\br{ 3 }_{9}$  & $\br{ 6 }_{9}$  &  & 9  \\ \hline
$\br{ 5 }_{9}$  & $\br{ 10 }_{9}$  &  & 9  \\ \hline
$\br{ 9 }_{9}$  & $\br{ 18 }_{9}$  &  & 9  \\ \hline
$\br{ 11 }_{9}$  &  & $\br{ 11 }_{9}$  & 9  \\ \hline
$\br{ 13 }_{9}$  &  & $\br{ 13 }_{9}$  & 9  \\ \hline
$\br{ 17 }_{9}$  & $\br{ 34 }_{9}$  &  & 9  \\ \hline
$\br{ 25 }_{9}$  &  & $\br{ 25 }_{9}$  & 9  \\ \hline
 &  36 &  28 & \end{tabular}
\end{table*}

The author maintains a website \cite{ljcr} of known abelian
difference sets for a wide range of parameters.  In particular, the
existence of cyclic difference sets for all parameters $(v,k,\lambda)$ with
$k <  105$ is known (the smallest open cases are $(1561,105,7)$,
$(2185,105,5)$, $(1111,111,11)$ and $(465,145,45)$).
A search was done for liftings of all known cyclic difference sets
with $k \leq 256$.  Note that this may not be a complete list;
for some parameters a difference set is known, but others might exist.

Tables~\ref{tab:new}, \ref{tab:new2},
\ref{tab:newbig} and \ref{tab:newbigger} give the results for
difference sets with different ranges of $k$ from $50$ to $256$.
Parameters eliminated by \cite{ajmp} (Paley, Singer or twin prime
power difference sets where $\lambda$ is a power of two) are omitted.
All parameters were settled except for $(364,121,40)$, $(255,127,63)$,
$(1464,133,12)$ and $(2380,183,14)$ (all Singer parameters),
for which none of the theorems
applied, and the exhaustive search was impractical.
The fact that no lifts were found for parameters other than
(\ref{eq:singercomp}) strengthens the evidence for the conjecture that
none exist. 

Lifts of complements of classical Singer parameters are given separately in
Table~\ref{tab:singer}, with the number of inequivalent RDS given when known.
Note that there are non-Singer complement difference sets with these
parameters that have lifts; all of the $(121,81,27)$ difference sets
(these lifts were found by Pott \cite{pott}), all of the $(364,243,162)$
difference sets, and one of the $(511,256,128)$ difference sets
(the GMW difference set \cite{gmw}).


\begin{table*}
  \centering
\caption{Possible liftings with $50 < k < 100$}
\label{tab:new}
\begin{tabular}{|c|c|c|c|c|}  \hline
$v$ & $k$ & $\lambda$ & type  &  nonexistence proof \\ \hline
103 & 51 & 25 & Paley & Thm.~\ref{thm:pott2.5} \\
103 & 52 & 26 & complement of Paley & $n=2$: Thm.~\ref{thm:pott2.5}, $n=13$: Thm.~\ref{thm:lam3.4} \\
107 & 53 & 26 & Paley & $n=2$: Thm.~\ref{thm:pott2.5}, $n=13$: Thm.~\ref{thm:lam3.4}\\
107 & 54 & 27 & complement of Paley & Thm.~\ref{thm:pott2.5} \\
400 & 57 & 8 & (3,7) Singer & \cite{ajmp} \\
127 & 63 & 31 & (6,2) Singer & Thm.~\ref{thm:lam3.2} \\
131 & 65 & 32 & Paley & \cite{ajmp}  and Thm.~\ref{thm:pott2.5}\\
131 & 66 & 33 & complement of Paley & Thm.~\ref{thm:pott2.5} \\
139 & 69 & 34 & Paley & Thm.~\ref{thm:pott2.5} \\
139 & 70 & 35 & complement of Paley & Thm.~\ref{thm:pott2.5}\\
143 & 71 & 35 & TPP(11) & Thm.~\ref{thm:lam3.4}\\
143 & 72 & 36 & complement of TPP(11) & Thm.~\ref{thm:pott2.5}  \\
585 & 73 & 9 & (3,8) Singer & search \\
151 & 75 & 37 & Paley & Thm.~\ref{thm:lam3.4} \\
101 & 76 & 57 & complement of Type B (Hall) & Thm.~\ref{thm:lam3.4} \\
151 & 76 & 38 & complement of Paley & $n=2$: Thm.~\ref{thm:pott2.5}, $n=19$: Thm.~\ref{thm:lam3.4} \\
109 & 81 & 60 & complement of Type B0 (Hall) & Thm.~\ref{thm:lam3.4}\\
163 & 81 & 40 & Paley & $n=2$: Thm.~\ref{thm:lam3.2}, $n=5$: search \\
163 & 82 & 41 & complement of Paley & Thm.~\ref{thm:lam3.4} \\
167 & 83 & 41 & Paley & Thm.~\ref{thm:lam3.4} \\
167 & 84 & 42 & complement of Paley & $n=2,7$: Thm.~\ref{thm:pott2.5}, $n=3$: Thm.~\ref{thm:lam3.4}  \\
341 & 85 & 21 & (4,4) Singer & Thm.~\ref{thm:pott2.5}\\
179 & 89 & 44 & Paley &  $n=2$: Thm.~\ref{thm:pott2.5}, $n=11$: Thm.~\ref{thm:lam3.4}  \\
179 & 90 & 45 & complement of Paley & Thm.~\ref{thm:pott2.5} \\
820 & 91 & 10 & (3,9) Singer & $n=2$: search, $n=5$: Thm.~\ref{thm:lam3.4}\\
191 & 95 & 47 & Paley & Thm.~\ref{thm:pott2.5} \\
191 & 96 & 48 & complement of Paley & Thm.~\ref{thm:pott2.5} \\
199 & 99 & 49 & Paley & Thm.~\ref{thm:lam3.2}\\ \hline
\end{tabular}
\end{table*}

\begin{table*}
  \centering
\caption{Possible liftings with $100 \leq k < 150$}
\label{tab:new2}
\begin{tabular}{|c|c|c|c|c|}  \hline
$v$ & $k$ & $\lambda$ & type  & nonexistence proof \\ \hline
133 & 100 & 75 & complement of Hall (1956)  & search \\  
199 & 100 & 50 & complement of Paley &  search \\
211 & 105 & 52 & Paley &  Thm.~\ref{thm:pott2.5} \\
211 & 106 & 53 & complement of Paley &  Thm.~\ref{thm:lam3.4} \\
223 & 111 & 55 & Paley &  $n=5$: Thm.~\ref{thm:pott2.5}, $n=11$: Thm.~\ref{thm:lam3.4} \\
223 & 112 & 56 & complement of Paley & $n=2$: Thm.~\ref{thm:pott2.5}, $n=7$: Thm.~\ref{thm:lam3.4}  \\
227 & 113 & 56 & Paley &  $n=2$: Thm.~\ref{thm:pott2.5}, $n=7$: Thm.~\ref{thm:lam3.4}  \\
227 & 114 & 57 & complement of Paley &  Thm.~\ref{thm:pott2.5} \\
239 & 119 & 59 & Paley &  Thm.~\ref{thm:pott2.5} \\
239 & 120 & 60 & complement of Paley & Thm.~\ref{thm:pott2.5}  \\
364 & 121 & 40 & (5,3) Singer &  $n=2$: search, $n=5$: OPEN \\
251 & 125 & 62 & Paley &  $n=2$: Thm.~\ref{thm:pott2.5}, $n=31$: Thm.~\ref{thm:lam3.4}  \\
251 & 126 & 63 & complement of Paley &  $n=2$: Thm.~\ref{thm:pott2.5}, $n=7$: Thm.~\ref{thm:lam3.4}  \\
255 & 127 & 63 & (7,2) Singer & OPEN  \\
263 & 131 & 65 & Paley &  Thm.~\ref{thm:lam3.4} \\
263 & 132 & 66 & complement of Paley & $n=2,3$: Thm.~\ref{thm:pott2.5}, $n=11$: Thm.~\ref{thm:lam3.4}   \\
1464 & 133 & 12 & (3,11) Singer & OPEN  \\
271 & 135 & 67 & Paley & Thm.~\ref{thm:pott2.5}  \\
271 & 136 & 68 & complement of Paley &  $n=2$: Thm.~\ref{thm:pott2.5}, $n=17$: Thm.~\ref{thm:lam3.4}  \\
283 & 141 & 70 & Paley & Thm.~\ref{thm:pott2.5}  \\
283 & 142 & 71 & complement of Paley & Thm.~\ref{thm:lam3.5}   \\
197 & 148 & 111 & complement of Type B (Hall) & Thm.~\ref{thm:lam3.4} \\ \hline
\end{tabular}
\end{table*}

\begin{table*}
  \centering
\caption{Possible liftings with $150 \leq k < 200$}
\label{tab:newbig}
\begin{tabular}{|c|c|c|c|c|}  \hline
$v$ & $k$ & $\lambda$ & type  & nonexistence proof \\ \hline
307 & 153 & 76 & Paley &  $n=2$: Thm.~\ref{thm:pott2.5}, $n=19$: Thm.~\ref{thm:lam3.2} \\
307 & 154 & 77 & complement of Paley &  $n=11$: Thm.~\ref{thm:pott2.5}, $n=7$: Thm.~\ref{thm:lam3.4} \\
311 & 155 & 77 & Paley &  $n=7$: Thm.~\ref{thm:pott2.5}, $n=11$: Thm.~\ref{thm:lam3.4} \\
311 & 156 & 78 & complement of Paley &  $n=2$: Thm.~\ref{thm:pott2.5}, $n=3, 13$: Thm.~\ref{thm:lam3.4} \\
781 & 156 & 31 & (4,5) Singer &  Thm.~\ref{thm:pott2.5} \\
323 & 161 & 80 & TPP(17) &  Thm.~\ref{thm:pott2.5} \\
323 & 162 & 81 & complement of TPP(17) & Thm.~\ref{thm:pott2.5}  \\
331 & 165 & 82 & Paley &  Thm.~\ref{thm:pott2.5} \\
331 & 166 & 83 & complement of Paley & Thm.~\ref{thm:lam3.5}  \\
677 & 169 & 42 & Type B (Hall) &  search \\
347 & 173 & 86 & Paley & $n=2$: Thm.~\ref{thm:pott2.5}, $n=43$: Thm.~\ref{thm:lam3.4}  \\
347 & 174 & 87 & complement of Paley &  Thm.~\ref{thm:pott2.5} \\
359 & 179 & 89 & Paley &  Thm.~\ref{thm:lam3.4} \\
359 & 180 & 90 & complement of Paley & $n=2, 3$: Thm.~\ref{thm:pott2.5}, $n=5$: Thm.~\ref{thm:lam3.4}  \\
367 & 183 & 91 & Paley &  $n=7$: Thm.~\ref{thm:pott2.5}, $n=13$: Thm.~\ref{thm:lam3.4} \\
2380 & 183 & 14 & (3,13) Singer & OPEN  \\
367 & 184 & 92 & complement of Paley & $n=2$: Thm.~\ref{thm:pott2.5}, $n=23$: Thm.~\ref{thm:lam3.4}  \\
379 & 189 & 94 & Paley &  $n=2$: Thm.~\ref{thm:pott2.5}, $n=47$: Thm.~\ref{thm:lam3.4} \\
379 & 190 & 95 & complement of Paley & Thm.~\ref{thm:pott2.5}  \\
383 & 191 & 95 & Paley & Thm.~\ref{thm:lam3.4}  \\
383 & 192 & 96 & complement of Paley & $n=2$: Thm.~\ref{thm:pott2.5}, $n=3$: Thm.~\ref{thm:lam3.4}  \\ \hline
\end{tabular}
\end{table*}

\begin{table*}
  \centering
\caption{Possible liftings with $200 \leq k < 256$}
\label{tab:newbigger}
\begin{tabular}{|c|c|c|c|c|}  \hline
  $v$ & $k$ & $\lambda$ & type  & nonexistence proof \\ \hline
419 & 209 & 104 & Paley & Thm.~\ref{thm:pott2.5} \\
419 & 210 & 105 & complement of Paley & Thm.~\ref{thm:pott2.5} \\
431 & 215 & 107 & Paley & Thm.~\ref{thm:pott2.5} \\
431 & 216 & 108 & complement of Paley & Thm.~\ref{thm:pott2.5} \\
439 & 219 & 109 & Paley & Thm.~\ref{thm:lam3.4} \\
439 & 220 & 110 & complement of Paley &  $n=2$: Thm.~\ref{thm:pott2.5}, $n=5, 11$: Thm.~\ref{thm:lam3.4} \\
443 & 221 & 110 & Paley &  Thm.~\ref{thm:pott2.5} \\
443 & 222 & 111 & complement of Paley & $n=3$: Thm.~\ref{thm:pott2.5}, $n=37$: Thm.~\ref{thm:lam3.4} \\
901 & 225 & 56 & Storer \cite{storer} & search \\
463 & 231 & 115 & Paley &  Thm.~\ref{thm:pott2.5} \\
463 & 232 & 116 & complement of Paley &  Thm.~\ref{thm:pott2.5} \\
467 & 233 & 116 & Paley & $n=2$: Thm.~\ref{thm:pott2.5}, $n=29$: Thm.~\ref{thm:lam3.4} \\
467 & 234 & 117 & complement of Paley & Thm.~\ref{thm:pott2.5} \\
479 & 239 & 119 & Paley & Thm.~\ref{thm:lam3.4} \\
479 & 240 & 120 & complement of Paley & Thm.~\ref{thm:pott2.5} \\
487 & 243 & 121 & Paley & Thm.~\ref{thm:lam3.4} \\
487 & 244 & 122 & complement of Paley & $n=2$: Thm.~\ref{thm:pott2.5}, $n=61$: Thm.~\ref{thm:lam3.4} \\
491 & 245 & 122 & Paley & $n=2$: Thm.~\ref{thm:pott2.5}, $n=61$: Thm.~\ref{thm:lam3.4} \\
491 & 246 & 123 & complement of Paley & $n=3$: Thm.~\ref{thm:lam3.5}, $n=41$: Thm.~\ref{thm:pott2.5} \\
499 & 249 & 124 & Paley & Thm.~\ref{thm:pott2.5} \\
499 & 250 & 125 & complement of Paley & Thm.~\ref{thm:pott2.5} \\
503 & 251 & 125 & Paley & Thm.~\ref{thm:lam3.4} \\
503 & 252 & 126 & complement of Paley & $n=2$: Thm.~\ref{thm:pott2.5}, $n=3, 7$: Thm.~\ref{thm:lam3.4} \\
511 & 255 & 127 & $(8,2)$ Singer & Thm.~\ref{thm:pott2.5} \\
\hline
\end{tabular}
\end{table*}

\begin{table*}
  \centering
  \caption{Lifts for complements of $(d,q)$ Singer difference sets from searches, along with
    the number of inequivalent liftings for each.  The $n$ values are
    the largest values that have a lifting; liftings exist for each
    divisor of $n$ and no other values.}
  \label{tab:singer}
\begin{tabular}{|c|c||c|c|c|c||c|} \hline
$d$ & $q$ & $m$ & $n$ & $k$ & $\lambda$ & \# inequivalent \\ \hline \hline
$2$ & $2$ & $7$ & $2$  &  $4$ &  $1$ & $1$ \\
$2$ & $3$ & $13$  & $2$  &  $9$ &  $3$& $2$ \\
$2$ & $4$ & $21$  & $6$  &  $16$ &  $2$ & $1$ \\
$4$ & $2$ & $31$ & $2$  &  $16$ &  $4$ & $2$ \\
$2$ & $5$ & $31$ & $4$  &  $25$ &  $5$ &  $2$  \\
$3$ & $3$ & $40$ & $2$  &  $27$ &  $9$ & $3$ \\
$2$ & $7$ & $57$ & $6$  &  $49$ &  $7$ & $2$ \\ \hline
$2$ & $8$ & $73$ & $14$  &  $64$ &  $7$ &  $1$\\
$3$ & $4$ & $85$ & $3$  &  $64$ &  $16$ &  $2$\\
$6$ & $2$ & $127$ & $2$  &  $64$ &  $16$ &  $4$ \\
$2$ & $9$ & $91$ & $8$  &  $81$ &  $9$ &  $2$\\
$4$ & $3$ & $121$ & $2$  &  $81$ &  $27$ &  $2$\\
$2$ & $11$ & $133$ & $10$  &  $121$ &  $11$ &  $??$ ($2$ for $n=2$)\\
$3$ & $5$ & $156$ & $4$  &  $125$ &  $25$ &   $??$ ($2$ for $n=2$)\\
$2$ & $13$ & $183$ & $12$  &  $169$ &  $13$ &  $??$\\
$5$ & $3$ & $364$ & $2$  &  $243$ &  $81$ &  $12$\\
$8$ & $2$ & $511$ & $2$  &  $256$ &  $64$ &  $5$\\
$4$ & $4$ & $341$ & $6$  &  $256$ &  $32$ &   $??$ ($2$ for $n=2$, $1$ for $n=3$)\\
$2$ & $16$ & $273$ & $30$  &  $256$ &  $8$ &  $??$ ($1$ for $n=2,3$)\\
\hline
\end{tabular}
\end{table*}

\section{Circulant weighing matrices}\label{sec:cw}


As discussed in the introduction, a circulant weighing matrix
$CW(n,k)$ is a cyclically symmetric matrix satisfying (\ref{eq:cw}),
and is equivalent to a group ring element $W$ with coefficients in $\{-1,0,1\}$ 
satisfying (\ref{eq:gpring}) with $\lambda=0$.
It is well known that a
$CW(n,k)$ only exists when $k$ is a square.
If the elements of $W$ with nonzero coefficients are not 
contained in a coset of $\Znum_t$ for any proper divisor $t$ of $n$, 
then $W$ is called {\em proper}.


Leung and Schmidt \cite{ls2011} showed that there are only a finite
number of proper $CW(n,k)$ for a given $k$ when $k$ is an odd prime power.
All the proper $CW(n,k)$ are known for $k=4$ (see \cite{eh}),
$9$ (see \cite{aalms08}), and 16 (see \cite{arasu_etal_2006},
\cite{agz}).
A preprint of Leung and Ma \cite{lm2} claimed that the only proper
$CW(n,25)$ have $n=31,33,62,71, 124,142$, but was never published.

There are two main methods for constructing proper CW's.  One is the
Kronecker product  construction of Arasu and Seberry \cite{arasuseberry},
which accounts for almost all of the proper $CW(n,k)$ for $k$
not a prime power, and all the infinite classes except for $CW(2m,2^2)$
\cite{eh} and $CW(48m,6^2)$ \cite{ss13}:

\begin{theorem}\label{thm:kronecker}
  If a proper $CW(n_1,k_1)$ and proper $CW(n_2,k_2)$ exist with $\gcd(n_1,n_2)=1$,
  then they may be used to construct a proper $CW(n_1 n_2,k_1 k_2)$
\end{theorem}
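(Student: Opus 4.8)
The plan is to realize the Kronecker product directly inside the group ring $\Znum[\Znum_{n_1 n_2}]$ by means of the Chinese Remainder Theorem. Let $z$ generate $\Znum_{n_1 n_2}$. Since $\gcd(n_1,n_2)=1$, the subgroups $\br{z^{n_2}}$ and $\br{z^{n_1}}$ have orders $n_1$ and $n_2$, intersect trivially, and together generate everything, so $\Znum_{n_1 n_2} = \br{z^{n_2}} \times \br{z^{n_1}}$. Writing the two given matrices as group ring elements $A_1 = \sum_i a_i x^i \in \Znum[\Znum_{n_1}]$ and $A_2 = \sum_j b_j y^j \in \Znum[\Znum_{n_2}]$ with $a_i,b_j \in \{-1,0,1\}$, I would define $A_1' = \sum_i a_i z^{n_2 i}$ and $A_2' = \sum_j b_j z^{n_1 j}$, the images of $A_1$ and $A_2$ under the injections $x \mapsto z^{n_2}$ and $y \mapsto z^{n_1}$, and set $A = A_1' A_2' = \sum_{i,j} a_i b_j\, z^{n_2 i + n_1 j}$. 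This $A$ is the proposed proper $CW(n_1 n_2, k_1 k_2)$.

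Two of the three verifications are routine. First, the map $(i,j) \mapsto n_2 i + n_1 j \pmod{n_1 n_2}$ is a bijection $\Znum_{n_1}\times\Znum_{n_2}\to\Znum_{n_1 n_2}$ (its kernel is trivial, as one sees by reducing modulo $n_1$ and modulo $n_2$ and using $\gcd(n_1,n_2)=1$), so the monomials $z^{n_2 i + n_1 j}$ are pairwise distinct. Hence no terms of $A$ collide, the coefficient of $z^{n_2 i + n_1 j}$ is exactly $a_i b_j \in \{-1,0,1\}$, and the number of nonzero coefficients is $k_1 k_2$. Second, because the ambient group is abelian, inversion is a ring homomorphism and the two embeddings commute, so
$$A A^{(-1)} = \bigl(A_1'\,(A_1')^{(-1)}\bigr)\bigl(A_2'\,(A_2')^{(-1)}\bigr).$$
Each of $x \mapsto z^{n_2}$ and $y \mapsto z^{n_1}$ is a group monomorphism, hence induces a ring homomorphism carrying the identity $A_1 A_1^{(-1)} = k_1$ to $A_1'(A_1')^{(-1)} = k_1$ (a constant maps to the same constant), and likewise $A_2'(A_2')^{(-1)} = k_2$. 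Therefore $A A^{(-1)} = k_1 k_2$, which is (\ref{eq:cw}) for weight $k_1 k_2$.

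The substantive step, and the one where the coprimality hypothesis is essential, is properness. I would first record the standard reformulation: a circulant weighing matrix is proper exactly when the support of its group ring element generates the whole cyclic group, since $A = B^{(t)}$ for a divisor $t>1$ of the group order is equivalent to $A$ being supported on the proper subgroup $\br{z^t}$. By the bijection above, the support of $A$ is $\{z^{n_2 i + n_1 j} : a_i \neq 0,\ b_j \neq 0\}$, which under the decomposition $\Znum_{n_1 n_2} = \br{z^{n_2}}\times\br{z^{n_1}}$ is precisely $\mathrm{supp}(A_1)\times\mathrm{supp}(A_2)$. Let $H$ be the subgroup it generates. Projecting onto the first factor, and using that $\mathrm{supp}(A_2)$ is nonempty, shows $H$ surjects onto $\br{z^{n_2}}\cong\Znum_{n_1}$, because $\mathrm{supp}(A_1)$ generates $\Znum_{n_1}$ by properness of the first matrix; symmetrically $H$ surjects onto the second factor. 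Finally, because $\gcd(n_1,n_2)=1$ the group $\Znum_{n_1 n_2}$ is cyclic with coprime factor orders, so every subgroup splits as a direct product of a subgroup of $\br{z^{n_2}}$ and a subgroup of $\br{z^{n_1}}$; a subgroup surjecting onto both factors must therefore be the whole group, giving $H = \Znum_{n_1 n_2}$ and hence properness. The one delicate point is exactly this last implication: surjectivity of the two projections does not force a subgroup of a direct product to be everything in general, and it is precisely the coprimality of $n_1$ and $n_2$ that makes the splitting available and rescues the argument.
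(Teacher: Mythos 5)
Your argument is correct and complete. The paper itself gives no proof of this statement---it is quoted as the Kronecker product construction of Arasu and Seberry with only a citation---so there is nothing internal to compare against; what you have written is a correct, self-contained version of the standard argument, realizing the Kronecker product as an honest product inside $\Znum[\Znum_{n_1 n_2}]$ via the Chinese Remainder Theorem. The two routine verifications (distinctness of the monomials $z^{n_2 i + n_1 j}$, so that coefficients stay in $\{-1,0,1\}$ and the weight multiplies, and multiplicativity of $A \mapsto A A^{(-1)}$ under the two commuting subring embeddings) are handled properly. The only step requiring care is properness, and you identify it correctly: after translating ``proper'' into ``the support generates the whole group,'' the support of the product is $\mathrm{supp}(A_1)\times\mathrm{supp}(A_2)$, the subgroup it generates surjects onto both coprime factors, and a subgroup of $\Znum_{n_1}\times\Znum_{n_2}$ with $\gcd(n_1,n_2)=1$ surjecting onto both factors has order divisible by $n_1 n_2$ and hence is everything. (That last step could be stated even more briefly via Lagrange---$n_1 \mid |H|$ and $n_2 \mid |H|$ force $n_1 n_2 \mid |H|$---but your splitting argument is equally valid.) No gaps.
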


The other construction uses cyclic relative difference sets; see
\cite{arasu_etal_2006}:

\begin{theorem}\label{thm:rds}
  If a cyclic $(m,2n,k,\lambda$)-RDS exists with $m$ and $n$  odd,
  then there is a proper $CW(mn,k)$.
\end{theorem}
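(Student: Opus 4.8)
The plan is to realize the circulant weighing matrix as the image of $R$ under a sign-twisted projection, and then to establish properness by a short character computation. Writing $n=2n'$ with $n'$ odd, the parity hypotheses give that $mn/2=mn'$ is odd, so $\gcd(2,mn/2)=1$ and the Chinese Remainder Theorem yields $\Znum_{mn}\cong\Znum_2\times\Znum_{mn/2}$. Under this identification the unique order-two subgroup $T=\{0,mn/2\}$ corresponds to $\Znum_2\times\{0\}$, and since $2\mid n$ it lies in $N$. Let $\chi$ be the order-two character of $\Znum_{mn}$, so $\chi(mn/2)=-1$, and let $\pi\colon\Znum_{mn}\to\Znum_{mn/2}$ be the natural projection with kernel $T$. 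I would define the ring homomorphism $\phi\colon\Znum[\Znum_{mn}]\to\Znum[\Znum_{mn/2}]$ by $\phi(g)=\chi(g)\,\pi(g)$ and set $W=\phi(R)$.

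Next I would verify that $W$ represents a $CW(mn/2,k)$. Each fibre $\pi^{-1}(y)=\{g_0,\,g_0+mn/2\}$ consists of two points whose difference $mn/2$ is a nonzero element of $N$, so the defining property of the RDS forbids $R$ from containing both; hence every coefficient of $W$ lies in $\{-1,0,1\}$ and the number of nonzero coefficients equals $|R|=k$. Because $\chi$ is real-valued, $\phi$ commutes with the involution $g\mapsto g^{-1}$, so $WW^{-1}=\phi(RR^{-1})=k+\lambda(\phi(G)-\phi(N))$. A direct computation gives $\phi(G)=0$ and $\phi(N)=0$, in each case because the factor $\sum_{\epsilon\in T}\chi(\epsilon)=1+(-1)=0$ appears. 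Thus $WW^{-1}=k$, as required.

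The delicate step is properness. Unwinding the definition, the statement $W=B^{(t)}$ for some divisor $t>1$ of $mn/2$ is equivalent to the support $\pi(R)$ of $W$ being contained in $p\Znum_{mn/2}$ for some prime $p\mid mn/2$; since $mn/2$ is odd, $p$ is odd. If this occurred then $p\mid g$ for every $g\in R$, so a character $\psi$ of order $p$ on $\Znum_{mn}$ would satisfy $\psi(R)=k$. Applying $\psi$ to $RR^{-1}=k+\lambda(G-N)$ gives $k^2=k+\lambda(\psi(G)-\psi(N))$ with $\psi(G)=0$; according as $p\nmid m$ or $p\mid m$ one gets $\psi(N)=0$ or $\psi(N)=n$, hence $k^2=k$ or $k^2=k-\lambda n$, each impossible for a nontrivial RDS with $k>1$ and $\lambda\ge1$. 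This contradiction proves that $W$ is proper.

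I expect properness to be the main obstacle: the construction of $W$ is routine once the Chinese Remainder splitting is in place, so the real work is in correctly translating \emph{proper} into a statement about the support of $W$ and in carrying out the two-case character argument that rules out a proper-subgroup support.
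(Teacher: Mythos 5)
Your proof is correct and takes essentially the approach the paper intends: the paper states this result without proof, citing Theorem 6.4 of \cite{agz} and \cite{arasu_etal_2006}, and the argument behind that citation is precisely your sign-twisted projection through the splitting $\Znum_{mn}\cong\Znum_2\times\Znum_{mn/2}$, which is exactly what the hypotheses $m$ odd and $n\equiv 2\pmod 4$ exist to guarantee. Your translation of properness into a support condition and the two-case character computation ruling it out (with the harmless caveat $k>1$, $\lambda\ge 1$ excluding trivial RDSs) is the standard completion of that argument.
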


\begin{proof}
  The proof appears in more generality (for divisible difference sets
  in abelian groups) in Ang's thesis \cite{ang}, which is not widely
  available, so we give it here.

  Let $G=\Znum_{2mn}$ and $G'=\Znum_{mn}$.
  We will represent
  $G$ as $G' \times \Znum_2$, and write
  elements of $G$ as $(g,u)$, for $g \in G'$ and $u \in \Znum_2$.
  Similarly, for $N$ the subgroup of $G$ of order $2n$, let $N'$ be
  the subgroup of $G'$
  of order $n$ and write elements of $N$ as $(n,u)$.
  We will write the group ring element for the $(m,2n,k,\lambda)$-RDS as
  $$
  R =  (X,0) + (Y,1) = \sum_{x_i \in X} (x_i,0) + \sum_{y_j \in Y} (y_j,1),
  $$
  for $X, Y \subset G'$.
  Then from 
  (\ref{eq:rds_gpring}), we have
  \begin{eqnarray*}
    R  R^{-1} & = & \left( (X,0) + (Y,1) \right) \left( (X,0) + (Y,1) \right)^{-1} \\
    & = &    (X X^{-1},0) + (Y Y^{-1},0) +  (Y X^{-1},1) + (X Y^{-1},1) \\
    & = & k + \lambda (G - N) = k + \lambda \left( (G' - N',0) +  (G'
    - N',1) \right) .
  \end{eqnarray*}

  Equating terms in cosets of $\Znum_2$, we have
  \begin{equation}\label{eq:rdstocw0}
  X X^{-1} + Y Y^{-1} = k + \lambda(G'-N')    
  \end{equation}
  and
  \begin{equation}\label{eq:rdstocw1}
  Y X^{-1} + X Y^{-1} =  \lambda(G'-N'),
  \end{equation}
  so
  $$
  (X-Y)(X-Y)^{-1} = k.
  $$
  
  From (\ref{eq:rdstocw0}) and (\ref{eq:rdstocw1}), $X$ and $Y$
  generate $G'$, and so $X-Y$ is not contained in a coset of a proper
  subgroup of $G'$.  $X$ and $Y$ are
  disjoint, since if some $x_i = y_j$, then
  $(x_i,0) - (y_j,1) = (0,1) $ is in the forbidden subgroup $N$.
  Therefore $W = X-Y$ is a proper $CW(mn,k)$.
\end{proof}

This theorem was given as Theorem 6.4 of \cite{agz}, but neglected the
conditions on $m$ and $n$.
Also, Theorem 6.5 of that paper may be
stated in more generality:

\begin{theorem}\label{thm:singercw}
  Let $q$ be a prime power and $d$ odd.  If $n$ is an odd divisor
  of $q-1$, 
  then a proper
  $CW\left(\frac{q^d-1}{q-1} n,q^{d-1}\right)$ exists.
\end{theorem}

\begin{proof}
  From Theorem~\ref{thm:singer}, a cyclic $(m,2n,k,\lambda)$-RDS exists
  for $m=(q^d-1)/(q-1)$, $k=q^{d-1}$, and $n$ any odd divisor of $(q-1)$
  (since $n' =2n$ divides $(q-1)$ if $q$ is is odd, and $2(q-1)$ if
  $q$ is even).
  Since $d$
  is odd, $m$ will be odd, so a proper
  $CW\left(\frac{q^d-1}{q-1} n,q^{d-1}\right)$ exists by Theorem~\ref{thm:rds}.
\end{proof}

Theorem 6.5 gave this result only for $d=3$.  As a result, it missed a
number of CW's, including 
$CW(341,256)$ (from $q=4$, $d=5$; this CW has been constructed using balanced
generalized weighing matrices by Kharagani and Pender \cite{kp}), 
$CW(121,81)$ (from $q=3$, $d=5$) and $CW(127,64)$ (from $q=2$, $d=7$).

Table~\ref{tab:proper} is an updated version of Table 11 of
\cite{agz}.  The differences are:
\begin{itemize}
\item some errors and omissions have been corrected,
\item the more general Theorem~\ref{thm:singercw} is used,
\item results of new exhaustive searches have been included,
\item some parameters have both CW's coming from
  one of the constructions as well as other
  sporadic CW's.  Those are now indicated.
\end{itemize}

Note that all but one of the entries with $k>9^2$ not a prime power come from the RDS or
Kronecker constructions.  This is the point where computer searches
(the source of many of the CW's for smaller values of $k$) become
infeasible.  It seems clear that there are many unknown CW's, which
will require new construction techniques to discover.


\newcommand{\rds}[1]{\underline{#1}}
\newcommand{\othercon}[1]{{\boldmath{#1}}}
\newcommand{\compsearch}[1]{\fbox{#1}}
\newcommand{\prodcon}[1]{{#1}}

\begin{table*}
  \centering
\caption{Known proper $CW(n,k)$ with $n < 1000$ and $k \leq
  19^2$. Most CW's come from Theorem~\ref{thm:kronecker}.
  Underlined numbers have CW's coming from
  Theorem~\ref{thm:rds}, numbers in {\bf bold} come from other
  constructions (\cite{arasu_etal_2006}, \cite{ss13}),
  and  entries in boxes have sporadic $CW$s with constructions only
  for those parameters (generally an older result or computer search).
  Entries $cm$ are for all $m$ such that $cm
  \geq k$.
  $CW(217,8^2)$ is the only entry with matrices from the Kronecker
  construction and additional sporadic ones, and
  $CW(511,16^2)$ is the only entry with matrices from both
the Kronecker and RDS constructions.}
\label{tab:proper}
\begin{tabular}{|c|l|}  \hline
$k$ & Known Proper $CW(n,k)$  \\ \hline
$2^2$	&	\compsearch{$2m$}, \rds{$7$}	\\
$3^2$	&	\rds{$13$}, \compsearch{$24$}, \compsearch{$26$}	\\
$4^2$	&	\prodcon{$14m$}, \rds{$21$}, \rds{$31$}, \othercon{$62$}, \compsearch{\rds{$63$}}	\\
$5^2$	&	\rds{$31$}, \compsearch{$33$}, \compsearch{$62$}, \compsearch{$71$}, \compsearch{$124$}, \compsearch{$142$}	\\
$6^2$	&	\prodcon{$26m$}, \othercon{$48m$}, \prodcon{$91$}, \prodcon{$168$}, \prodcon{$182$} \\
$7^2$	&	\rds{$57$}, \compsearch{$87$},  \compsearch{$114$}, \rds{$171$}	\\
$8^2$	&	\prodcon{$42m$}, \prodcon{$62m$}, \rds{$73$}, \compsearch{\rds{$127$}}, \compsearch{$217$}, \othercon{$254$}, \prodcon{$434$}, \rds{$511$}	\\
$9^2$	&	\rds{$91$}, \compsearch{\rds{$121$}},  \compsearch{$182$}, \prodcon{$312$}\\
$10^2$	&	\prodcon{$62m$}, \prodcon{$66m$}, \prodcon{$142m$}, \prodcon{$217$}, \prodcon{$231$}, \prodcon{$434$}, \prodcon{$497$}, \prodcon{$868$}, \prodcon{$994$} \\
$11^2$	&	\rds{$133$}, \rds{$665$}	\\
$12^2$	&	\prodcon{$182m$}, \prodcon{$273$}, \prodcon{$336m$}, \prodcon{$403$}, \prodcon{$546$}, \prodcon{$744$}, \prodcon{$806$}, \prodcon{$819$}	\\
$13^2$	&	\rds{$183$}, \rds{$549$}	\\
$14^2$	&	\prodcon{$114m$}, \prodcon{$174m$}, \prodcon{$342m$}, \prodcon{$399$}, \prodcon{$609$}, \prodcon{$798$}	\\
$15^2$	&	\prodcon{$403$}, \prodcon{$429$}, \prodcon{$744$}, \prodcon{$806$}, \prodcon{$858$}, \prodcon{$923$} \\
$16^2$	&	\prodcon{$146m$}, \prodcon{$254m$}, \rds{$273$}, \rds{$341$}, \prodcon{$434m$}, \rds{$511$}, \prodcon{$651$}, \othercon{$682$}, \rds{$819$}, \prodcon{$889$}	\\
$17^2$	&	\rds{$307$}	\\
  $18^2$	&	\prodcon{$182m$}, \prodcon{$242m$}, \prodcon{$624m$}
  , \prodcon{$847$}	\\
$19^2$	&	\rds{$381$}	\\ \hline
\end{tabular}
\end{table*}

\vspace{0.1in}

\noindent
    {\bf Data Availability:} All the data generated for this paper is
available at {\tt zenodo}; see links at \cite{ljcr}.

\printbibliography

\end{document}